\newlength{\defbaselineskip}
\newcommand{\setlinespacing}[1]%
           {\setlength{\baselineskip}{#1 \defbaselineskip}}
\theoremstyle{plain}
\newtheorem{thm}{Theorem}[section]
\newtheorem{cor}[thm]{Corollary}
\newtheorem{lem}[thm]{Lemma}
\newtheorem{prop}[thm]{Proposition}
\newtheorem{exam}[thm]{Example}
\newtheorem{rem}[thm]{Remark}
\newtheorem{Def}[thm]{Definition}
\newcommand{\CC}{\mathbb{C}}
\newcommand{\DD}{\mathbb{D}}
\newcommand{\TT}{\mathbb{T}}
\newcommand{\HH}{\mathbb{H}}
\newcommand{\RR}{\mathbb{R}}
\newcommand{\OOO}{\mathcal{O}}
\newcommand{\KKK}{\mathcal{K}}
\newcommand{\I}{\uppercase\expandafter{\romannumeral1}}
\newcommand{\II}{\uppercase\expandafter{\romannumeral2}}
\newcommand{\III}{\uppercase\expandafter{\romannumeral3}}
\newcommand{\IV}{\uppercase\expandafter{\romannumeral4}}
\newcommand{\V}{\uppercase\expandafter{\romannumeral5}}
\newcommand{\VI}{\uppercase\expandafter{\romannumeral6}}
\newcommand{\Log}{\mathrm{Log}}
\makeatletter\@addtoreset{equation}{section} \makeatother
\begin{document}
\title {A Sharp Inequality of Hardy-Littlewood Type Via Derivatives}
 \author{Hui Dan, Kunyu Guo and Yi Wang }
\vskip2mm

 \maketitle \noindent\textbf{Abstract:}

In this paper we consider a generalized version of Carleman's inequality. An equivalent version of it states that $\|f\|_{A_\alpha^{2\alpha}}\leq\|f\|_{H^2}$, where $f$ is a holomorphic function and $\alpha>1$. If the norms $\|f\|_{A_\alpha^{2\alpha}}$ are decreasing in $\alpha$, then the inequality holds for $f$. For a dense set of functions, we calculate the derivative of the norms $\|f\|_{A_\alpha^{2\alpha}}$ in $\alpha$ and give sufficient conditions for this derivative to be non-positive. As an application, we prove the inequality for linear combinations of two reproducing kernels. Some numerical evidences are also provided.

 \vskip 0.1in \noindent \emph{Keywords:} Carleman's inequality, inequality of Hardy and Littlewood type

\vskip 0.1in \noindent\emph{2010 AMS Subject Classification:} 30H20, 30H10

\section{Introduction}
In this paper we consider a sharp inequality concerning the weighted Bergman norms and the Hardy norm on the unit disc $\DD$. Recall that for $0<p<\infty$, the Hardy space $H^p$ consists of all holomorphic functions $f$ on $\DD$ such that 
\[
\|f\|_{H^p}:=\sup_{0<r<1}\bigg(\int_0^{2\pi}|f(re^{i\theta})|^p\frac{\mathrm{d}\theta}{2\pi}\bigg)^{\frac{1}{p}}<\infty.
\]
For $\alpha>1$, the weighted Bergman space $A_\alpha^p$ consists of all holomorphic functions $f$ on $\DD$ such that 
\[
\|f\|_{A_\alpha^p}:=\bigg(\int_{\DD}|f(z)|^p(\alpha-1)(1-|z|^2)^\alpha \mathrm{d}\mu(z)\bigg)^{\frac{1}{p}}<\infty,
\]
where $\mathrm{d}\mu(z)=(1-|z|^2)^{-2}\frac{\mathrm{d}x\mathrm{d}y}{\pi}$ is the M\"{o}bius invariant measure of the unit disc.
The inequality we are considering in this paper is the following.

\medskip

\noindent\textbf{Conjecture 1.} For any $0<p\leq2$ and any $f \in H^p$,
\begin{equation}\label{eqn: conj. 1}
 \|f\|_{A_{2/p}^2}\leq\|f\|_{H^p}.
\end{equation}

\medskip

In the case when $p=1$, \eqref{eqn: conj. 1} is the Carleman's inequality (cf. \cite{Vuk03}). For $p=1/k$ where $k$ is any positive integer, Burbea \cite{Bur87} showed that \eqref{eqn: conj. 1} holds true. If one releases the restriction on the controlling constants, that is, if one asks whether 
\begin{equation}\label{eqn: conj. 1 with constant}
\|f\|_{A_{2/p}^2}\leq C\|f\|_{H^p}
\end{equation}
 for some constant $C$, then using interpolation techniques, Brevig, Ortega-Cerd\`{a}, Seip and Zhao has proved that \eqref{eqn: conj. 1 with constant} holds for $0<p<1$ and $C$ as close to $1$ as $C=(2/(e\log2))^{1/2}=1.030279\ldots$. In \cite{BOSZ}, the authors also gave and discussed about several interesting related conjectures and questions. 

In the case when $p=1$, inequality \eqref{eqn: conj. 1} becomes
\[
\|f\|_{A_2^2}\leq\|f\|_{H^1}.
\]
This is known as the Carleman's inequality. In 1921, Carleman \cite{Carl21} proved this inequality and used it to give the first complex-analytic proof of the famous isoperimetric theorem. For a different purpose, in 1932, Hardy and Littlewood showed that $H^p\subset A_2^{2p}$ (in particular, $H^1\subset A_2^2$) in \cite{HL32}.
See \cite{Vuk03} for an excellent exposition of the relation between the two problems. Various generalizations were proved, for example, in \cite{Aro50}\cite{Bur831}\cite{Bur832}\cite{Bur87}\cite{MP84}\cite{War61}.

In recently years, Inequality \eqref{eqn: conj. 1} has regained attention because of its application in number theory. Via an iterating process \cite{Bay02} \cite{Hel06}, contractive inequalities like \eqref{eqn: conj. 1} may ``lift'' multiplicatively to interesting inequalities for Hardy spaces on the infinite-dimensional torus, which in turn, by the Bohr transform, translates into inequalities of Dirichlet polynomials\cite{BBHOP}\cite{BBSSZ}\cite{BHS}.

Next, let us go to the technical side. An immediate observation is that for Conjecture 1, it suffices to consider any outer function $f$, because multiplying an inner function on $f$ does not make a difference on the right hand side of \eqref{eqn: conj. 1}, but makes the left hand side smaller. For an outer function, one can consider its powers. By replacing $f$ with $f^\alpha$ where $\alpha=2/p$, it is easy to show that Conjecture 1 is equivalent to the following (cf. \cite{BOSZ}).

\medskip

\noindent\textbf{Conjecture 2.} For any $\alpha>1$ and any $f\in H^2$,
\begin{equation}\label{eqn: conj. 2}
\|f\|_{A_\alpha^{2\alpha}}\leq\|f\|_{H^2}.
\end{equation}

\medskip

Then Burbea's result \cite{Bur87} is equivalent to that \eqref{eqn: conj. 2} holds when $\alpha$ is any integer that is greater than $1$. A straight-forward proof was given in \cite[Corollary 3]{BOSZ}. In the case when $\alpha$ is not an integer, the problem becomes very hard. The following computation may give us a clue. In the case when $\alpha>1$ is an integer, and suppose that $f$ is an outer function, $f=\sum_{n=0}^\infty a_nz^n$ and $a_0=1$. Then one can compute that
\begin{align}\label{eqn: computation 1}
\nonumber&\|f\|_{H^2}^{2\alpha}-\|f\|_{A_\alpha^{2\alpha}}^{2\alpha}\\
=&\frac{1}{2}\sum\limits_{N=0}^\infty {N+\alpha-1\choose N}^{-1}\sum\limits_{k,l=1}^N\sum\limits_{\substack{n_1+\cdots+n_k=N, n_i\geq1\\ m_1+\cdots+m_l=N, m_i\geq1}}{\alpha\choose k}{\alpha\choose l}
\bigg|a_{n_1}\cdots a_{n_k}-a_{m_1}\cdots a_{m_l}\bigg|^2.
\end{align}
This gives an alternative proof of Burbea's result. For non-integer valued $\alpha$, we have the same equation (under some convergence assumption). However, the coefficients $\alpha\choose k$ and $\alpha\choose l$ may be negative. Similar obstructions occur when one tries to extend other proofs of Burbea's result to a non-integer valued $\alpha$.

In \cite{BOSZ}, the authors gave several related conjectures (including the Conjectures 1 and 2 above) and questions. In particular, in \cite[Question 1]{BOSZ}, they asked whether $\|f\|_{A_\alpha^{2\alpha}}^{2\alpha}$ is non-increasing in the parameter $\alpha$, for an outer function $f$ with $\|f\|_{H^2}=1$. A positive answer to the question above will lead to a positive answer to Conjecture 2. In this paper, we will mainly consider the following similar question, which allows us to drop the assumption ``$\|f\|_{H^2}=1$'' (see Remark \ref{rem: f hardy norm equals 1}).

\medskip

\noindent\textbf{Question 3.} Suppose that $f$ is an outer function. For $\alpha>1$, denote 
\[
N_f(\alpha)=\|f\|_{A_\alpha^{2\alpha}}=\bigg(\int_\DD|f(z)|^{2\alpha}(\alpha-1)(1-|z|^2)^\alpha \mathrm{d}\mu(z)\bigg)^{\frac{1}{2\alpha}}.
\]
Is it true that 
\[
\frac{\partial}{\partial\alpha}N_f(\alpha)\leq0
\]
for all $\alpha>1$?

\medskip

In Section \ref{sec: a discrete formula}, we will first show that a positive answer to Question 3 implies Conjecture 2. Then we will give a discrete formula of $\frac{\partial}{\partial\alpha}N_f(\alpha)$ for a dense set of functions. Based on the formula, in Section \ref{sec: sufficient conditions}, we give some sufficient conditions for $\frac{\partial}{\partial\alpha}N_f(\alpha)$ to be non-positive. As an application, in Section \ref{sec: an application}, we obtain the following result.
\begin{thm}[Theorem \ref{thm: application} ]
	Suppose $f\in H^2$ and $f=\eta F$, where $\eta$ is inner and $F$ has no zeros in $\DD$. Suppose
\[
F^\alpha=c_1K_{w_1,\alpha}+c_2K_{w_2,\alpha}.
\] 
for some $\alpha>1$, and $\mathbf c\in\CC^2, \mathbf w\in\DD^2$.
	Then for any $1\leq\beta\leq\alpha$, we have 
	\begin{equation}
	\|F\|_{A_\alpha^{2\alpha}}\leq\|F\|_{A_\beta^{2\beta}}.
	\end{equation}
	Equality holds if and only if $F^\alpha=cK_{w,\alpha}$ for some $c\in\CC$ and $w\in\DD$.
	As a consequence, we have
	\begin{equation}
	\|f\|_{A_\alpha^{2\alpha}}\leq\|f\|_{H^2}.
	\end{equation}
	Equality holds if and only if $f=cK_{w,1}$ for some $c\in\CC$ and $w\in\DD$.
\end{thm}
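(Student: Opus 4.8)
The plan is to deduce the whole chain of inequalities from a single monotonicity statement for the one–variable function $\beta\mapsto N_F(\beta)=\|F\|_{A_\beta^{2\beta}}$, and to prove that monotonicity by controlling the sign of $\frac{\partial}{\partial\beta}N_F(\beta)$ through the machinery of the previous two sections. First I would dispose of the inner factor and of the passage to the Hardy norm. Since $|\eta|\le 1$ on $\DD$, with $|\eta|<1$ in the interior unless $\eta$ is constant, the pointwise domination $|f|\le|F|$ gives $\|f\|_{A_\alpha^{2\alpha}}\le\|F\|_{A_\alpha^{2\alpha}}$, strictly unless $\eta$ is constant, while $\|f\|_{H^2}=\|F\|_{H^2}$ because an inner factor does not change the Hardy norm. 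For the left endpoint $\beta=1$ I would record the limiting relation
\[
\lim_{\beta\to 1^+}\|F\|_{A_\beta^{2\beta}}=\|F\|_{H^2},
\]
which holds because the probability measures $(\beta-1)(1-|z|^2)^{\beta-2}\frac{\mathrm dx\mathrm dy}{\pi}$ concentrate on $\partial\DD$ and converge weakly to normalized arc length as $\beta\to 1^+$. Granting that $N_F$ is non-increasing on $[1,\alpha]$, letting $\beta\to1^+$ turns $N_F(\alpha)\le N_F(\beta)$ into $\|F\|_{A_\alpha^{2\alpha}}\le\|F\|_{H^2}$, and composing with the inner-factor estimate yields $\|f\|_{A_\alpha^{2\alpha}}\le\|f\|_{H^2}$. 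Thus everything reduces to proving that $N_F$ is non-increasing on $[1,\alpha]$ and strictly decreasing somewhere unless $F^\alpha$ is a single kernel.

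To prove the monotonicity I would write $N_F(\alpha)-N_F(\beta)=\int_\beta^\alpha \frac{\partial}{\partial s}N_F(s)\,\mathrm ds$ and show the integrand is $\le 0$ for every $s\in[1,\alpha]$; here $N_F(s)^{2s}=\|F^s\|_{A_s^2}^2=\sum_n|b_n(s)|^2\binom{n+s-1}{n}^{-1}$ with $b_n(s)=[z^n]F^s$, and $\frac{\partial}{\partial s}N_F(s)$ is supplied by the discrete formula of Section~\ref{sec: a discrete formula}, whose non-positivity is governed by the sufficient condition of Section~\ref{sec: sufficient conditions}; since $F^\alpha$ is a finite combination of kernels, $F$ is smooth enough that this formula applies (if necessary after an approximation argument placing $F$ in the dense set there). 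The role of the two-kernel hypothesis is to make that condition checkable. Before checking it I would exploit the Möbius invariance of the whole family: for any automorphism $\varphi$ of $\DD$ the weighted composition $g\mapsto(g\circ\varphi)(\varphi')^{s/2}$ is unitary on $A_s^2$ (by the identity $1-|\varphi(\zeta)|^2=|\varphi'(\zeta)|(1-|\zeta|^2)$), so replacing $F$ by $\widetilde F=(F\circ\varphi)(\varphi')^{1/2}$ leaves $N_F(s)$ unchanged for all $s$ simultaneously, preserves zero-freeness, and sends each $K_{w_i,\alpha}$ to a scalar multiple of $K_{\varphi^{-1}(w_i),\alpha}$. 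Choosing $\varphi$ with $\varphi^{-1}(w_1)=0$ and then rotating, I may assume $w_1=0$ and $w_2=t\in[0,1)$, so that $K_{w_1,\alpha}\equiv 1$ and $F^\alpha=c_1+c_2(1-tz)^{-\alpha}$, whose coefficients $b_0=c_1+c_2$ and $b_n=c_2\binom{n+\alpha-1}{n}t^n$ $(n\ge1)$ are completely explicit.

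With this normalization I would feed the coefficients of $F^s=\bigl(c_1+c_2(1-tz)^{-\alpha}\bigr)^{s/\alpha}$ into the sufficient condition and verify it for every $s\in[1,\alpha]$. I expect this verification to be the main obstacle: even after the reduction the relevant quantities involve the Taylor coefficients of a genuine non-integer power $\bigl(c_1+c_2(1-tz)^{-\alpha}\bigr)^{s/\alpha}$, so the required statement is a delicate positivity in the parameters $(t,c_2/c_1)$ and the variable $s$ rather than a closed-form computation. I would attack it by isolating the rank-two structure — the combination lives in the two-dimensional span of $K_{w_1,\alpha},K_{w_2,\alpha}$ — to reduce the condition to the (semi)definiteness of a small Gram-type expression, and then settle the resulting scalar inequality using monotonicity and convexity in $s$ together with the Cauchy--Schwarz inequality. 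The equality analysis should then fall out of the verification: the non-positivity degenerates to an identity exactly when one of $c_1,c_2$ vanishes or $w_1=w_2$, i.e. when $F^\alpha=cK_{w,\alpha}$; for such $F$ one computes $F^s=c^{s/\alpha}K_{w,s}$, whence $N_F(s)\equiv|c|^{1/\alpha}(1-|w|^2)^{-1/2}$ is constant and equality holds throughout. Tracing this back through the two reductions, equality in $\|f\|_{A_\alpha^{2\alpha}}\le\|f\|_{H^2}$ forces $\eta$ constant and $F^\alpha$ a single kernel, that is $f=cK_{w,1}$.
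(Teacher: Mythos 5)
Your preliminary reductions (removing the inner factor, the M\"{o}bius normalization to $w_1=0$, $w_2=t\in[0,1)$, and recovering the Hardy-norm statement at the left endpoint) all match steps the paper also takes, but the core of your argument has a genuine gap. You propose to prove that $\beta\mapsto N_F(\beta)=\|F\|_{A_\beta^{2\beta}}$ is non-increasing on all of $[1,\alpha]$ by showing $\frac{\partial}{\partial s}N_F(s)\leq0$ for every $s\in[1,\alpha]$, using the discrete formula (Theorem \ref{thm: discrete formula}) and the two-point result (Theorem \ref{thm: two points}). But those tools apply only when the function raised to the power $s$ is a linear combination of kernels \emph{with matching exponent}: the hypothesis is $F^s=\sum_i c_iK_{w_i,s}$, and here this holds only at the single value $s=\alpha$. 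For $s<\alpha$, the function $F^s=\bigl(c_1K_{w_1,\alpha}+c_2K_{w_2,\alpha}\bigr)^{s/\alpha}$ is in general not a finite combination of kernels $K_{w,s}$ at all, so the ``two-dimensional span'' structure you invoke is simply absent at exponent $s$, and neither the derivative formula nor Theorem \ref{thm: two points} says anything about $\frac{\partial}{\partial s}N_F(s)$ there. You flag this verification as ``the main obstacle,'' but what you sketch to overcome it (a Gram-type reduction plus convexity in $s$ and Cauchy--Schwarz) is a hope rather than an argument; indeed, pointwise non-positivity of the derivative along this family is essentially an instance of the open Question 3, not something the paper's machinery delivers.

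The paper circumvents exactly this obstruction with a device missing from your proposal: instead of the full norm $\|F^\beta\|_{A_\beta^2}$, it tracks the norm of the \emph{projection} of $F^\beta$ onto $\mathrm{span}\{K_{w_1,\beta},K_{w_2,\beta}\}$, namely $N_\beta=\bigl(F^\beta(\mathbf w)\mathbf W_\beta^{-1}F^\beta(\mathbf w)^*\bigr)^{1/\beta}=\|P_{\mathbf w,\beta}F^\beta\|_{A_\beta^2}^{2/\beta}$. This quantity depends only on the two values $F^\beta(w_1),F^\beta(w_2)$ and the Gram matrix $\mathbf W_\beta$, so it is a genuine two-kernel object for every $\beta$; its derivative is $\frac{\mathrm{d}}{\mathrm{d}\beta}N_\beta=\frac{1}{\beta^2}N_\beta^{1-\beta}\widehat{D}_\beta(F^\beta(\mathbf w),\mathbf w)$, which is $\leq0$ by Theorem \ref{thm: two points} via Definition \ref{def: Dhatalpha}. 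The theorem then follows from the chain $\|F\|_{A_\alpha^{2\alpha}}=N_\alpha^{1/2}\leq N_\beta^{1/2}=\|P_{\mathbf w,\beta}F^\beta\|_{A_\beta^2}^{1/\beta}\leq\|F^\beta\|_{A_\beta^2}^{1/\beta}=\|F\|_{A_\beta^{2\beta}}$: the projection costs nothing at $\beta=\alpha$, where $F^\alpha$ lies in the span, and only decreases the norm for $\beta<\alpha$. Note that this proves something weaker than the monotonicity you aim for --- only comparison of each $N_F(\beta)$ with the right endpoint --- and that weaker statement suffices. A second, smaller omission: the definitions of $D_\beta$ and $\widehat{D}_\beta$ require the values $F^\beta(w_i)$ to lie in a half plane (a branch-of-logarithm issue), and the paper devotes its case (2) to showing that, since $F$ is zero-free, the image of the segment $[0,w]$ under $F^\alpha$ avoids $0$ and hence lies in some rotated half plane $e^{i\theta_1}\HH$, after which $F$ is replaced by $e^{-i\theta_1/\alpha}F$; your proposal never addresses this point.
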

 Some further remarks and numerical evidences are provided in Section \ref{sec: some evidences}.

\section{A Discrete Formula}\label{sec: a discrete formula}
It is well-known that 
\begin{equation}\label{eqn: lim is hardy norm classic}
\lim_{\alpha\to1+}\|f\|_{A_\alpha^p}=\|f\|_{H^p}
\end{equation}
for $p>0$ and $f\in H^p$ \cite{Zhu04}. Similarly, we have the following lemma.

\begin{lem}\label{lem: limit is hardy norm}
	Let $\OOO^\ast$ denote the set of holomorphic functions $f$ defined in some open neighborhood of $\overline{\DD}$ such that $f(z)\neq0$ for all $z\in\overline{\DD}$. Then for any $f\in\OOO^\ast$ and any $p>0$,
	\begin{equation}\label{eqn: lim is hardy norm}
	\lim_{\alpha\to1+}\|f\|_{A_\alpha^{\alpha p}}=\|f\|_{H^p}.
	\end{equation}
	As a consequence, if $f\in\OOO^\ast$ and $\frac{\partial}{\partial\alpha}N_f(\alpha)\leq0$, $\forall\alpha>1$, then $\|f\|_{A_{\alpha}^{2\alpha}}\leq\|f\|_{H^2}$, $\forall\alpha>1$.
\end{lem}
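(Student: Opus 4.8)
The plan is to reduce the claimed boundary limit to a one-dimensional statement about an approximate identity concentrating at the circle $|z|=1$. Writing $z=re^{i\theta}$ and unwinding the definition of the weighted norm, one has
\begin{equation*}
\|f\|_{A_\alpha^{\alpha p}}^{\alpha p}=(\alpha-1)\int_\DD |f(z)|^{\alpha p}(1-|z|^2)^{\alpha-2}\frac{\mathrm{d}x\,\mathrm{d}y}{\pi}=2(\alpha-1)\int_0^1 M_\alpha(r)\,(1-r^2)^{\alpha-2}\,r\,\mathrm{d}r,
\end{equation*}
where $M_\alpha(r):=\int_0^{2\pi}|f(re^{i\theta})|^{\alpha p}\frac{\mathrm{d}\theta}{2\pi}$ is the integral mean. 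After the substitution $u=1-r^2$ this becomes $(\alpha-1)\int_0^1 M_\alpha(\sqrt{1-u})\,u^{\alpha-2}\,\mathrm{d}u$, and since $(\alpha-1)\int_0^1 u^{\alpha-2}\,\mathrm{d}u=1$, the measure $\mathrm{d}\nu_\alpha(u):=(\alpha-1)u^{\alpha-2}\,\mathrm{d}u$ is a probability measure on $[0,1]$. As $\alpha\to1+$ its mass escapes to the endpoint $u=0$ (equivalently $r=1$): for each fixed $\varepsilon\in(0,1)$ one computes $\nu_\alpha([\varepsilon,1])=1-\varepsilon^{\alpha-1}\to0$. Thus $\{\nu_\alpha\}$ behaves as an approximate identity at the boundary, and the expected limit value is the boundary value of the integrand.

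The hypothesis $f\in\OOO^\ast$ is exactly what makes this boundary value well behaved and lets me pass the \emph{two} $\alpha$-dependences through the limit at once. Since $f$ is continuous and zero-free on the compact set $\overline{\DD}$, there are constants $0<m\le|f|\le M<\infty$ there, so $\log|f|$ is bounded and $|f|^{\alpha p}=e^{\alpha p\log|f|}$ converges to $|f|^p$ uniformly on $\overline{\DD}$ as $\alpha\to1+$; consequently $M_\alpha(r)\to M_1(r)$ uniformly in $r\in[0,1]$. Moreover $M_1$ is continuous up to $r=1$, and $M_1(1)=\|f\|_{H^p}^p$ (the supremum defining the $H^p$ norm is attained in the limit $r\to1$ by continuity of $|f|$ and monotonicity of integral means). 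I then split $M_\alpha(\sqrt{1-u})=M_1(\sqrt{1-u})+\big[M_\alpha(\sqrt{1-u})-M_1(\sqrt{1-u})\big]$. The first piece integrates against $\nu_\alpha$ to $M_1(1)=\|f\|_{H^p}^p$ by the standard approximate-identity argument (continuity of $u\mapsto M_1(\sqrt{1-u})$ at $u=0$ together with the concentration $\nu_\alpha([\varepsilon,1])\to0$), while the second piece is dominated in absolute value by $\sup_r|M_\alpha(r)-M_1(r)|\cdot\nu_\alpha([0,1])\to0$. Taking $1/(\alpha p)$-th powers and using $\|f\|_{H^p}>0$ then yields \eqref{eqn: lim is hardy norm}.

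The only genuine subtlety is the joint dependence on $\alpha$ in both the profile $M_\alpha$ and the weight $\nu_\alpha$: a naive dominated-convergence attempt fails because $u^{\alpha-2}\to u^{-1}$ is not integrable, so one cannot simply interchange limit and integral. Isolating the $\alpha$-independent profile $M_1$ and controlling the remainder \emph{uniformly} is precisely what circumvents this, and it is here that $f\in\OOO^\ast$ (rather than a general $H^p$ function) enters essentially. Finally, the stated consequence follows at once: if $\frac{\partial}{\partial\alpha}N_f(\alpha)\le0$ for all $\alpha>1$, then $N_f$ is non-increasing on $(1,\infty)$, so for every $\alpha>1$,
\begin{equation*}
\|f\|_{A_\alpha^{2\alpha}}=N_f(\alpha)\le\lim_{\beta\to1+}N_f(\beta)=\lim_{\beta\to1+}\|f\|_{A_\beta^{2\beta}}=\|f\|_{H^2},
\end{equation*}
where the last equality is \eqref{eqn: lim is hardy norm} applied with $p=2$.
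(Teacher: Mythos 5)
Your proof is correct. It differs from the paper's in how the central limit is obtained: the paper simply cites the classical fact $\lim_{\alpha\to1+}\|f\|_{A_\alpha^p}=\|f\|_{H^p}$ for $f\in H^p$ (its equation \eqref{eqn: lim is hardy norm classic}, attributed to Zhu), so its entire proof reduces to controlling the exponent change $p\mapsto\alpha p$ via the elementary bound $|x^\alpha-x|\le M(\alpha-1)$ on the compact range $[c^p,C^p]$ of $|f|^p$, giving an error term $M(\alpha-1)\int_\DD(\alpha-1)(1-|z|^2)^\alpha\,\mathrm{d}\mu=M(\alpha-1)\to0$. You instead reprove that classical limit from scratch for $f\in\OOO^\ast$: passing to radial integral means, recognizing $(\alpha-1)u^{\alpha-2}\,\mathrm{d}u$ as a probability measure concentrating at $u=0$, and using continuity plus monotonicity of integral means to identify the boundary value $M_1(1)=\|f\|_{H^p}^p$. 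Your handling of the exponent change (uniform convergence $|f|^{\alpha p}\to|f|^p$ on $\overline{\DD}$, from $0<m\le|f|\le M$) is the same idea as the paper's Lipschitz bound, and both treat the monotonicity consequence identically. What your route buys is self-containedness and an explicit picture of why the weight acts as an approximate identity at the boundary; what the paper's buys is brevity, at the price of an external citation. Your closing remark about the failure of naive dominated convergence (since $u^{\alpha-2}\to u^{-1}$ is not integrable) correctly identifies the obstruction that both decompositions are designed to avoid.
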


\begin{proof}
	Without loss of generality, we assume $\|f\|_{H^p}=1$.
	Since $f\in\OOO^\ast$, there exists $C>c>0$ such that $c\leq|f(z)|\leq C$ for $z\in\DD$. It is easy to find a constant $M>0$ such that $|x^\alpha-x|\leq M(\alpha-1)$ for $x\in[c^p, C^p]$ and $\alpha\in(1,2)$. By \eqref{eqn: lim is hardy norm classic}, 
	\[
	\lim_{\alpha\to1+}\int_\DD|f(z)|^p(\alpha-1)(1-|z|^2)^\alpha \mathrm{d}\mu(z)=1.
	\]
	On the other hand, since $|f(z)|^p\in[c^p, C^p]$ for all $z\in\DD$, we have
	\begin{eqnarray*}
	&&\bigg|\int_\DD|f(z)|^p(\alpha-1)(1-|z|^2)^\alpha \mathrm{d}\mu(z)-\int_\DD|f(z)|^{\alpha p}(\alpha-1)(1-|z|^2)^\alpha \mathrm{d}\mu(z)\bigg|\\
	&\leq&M(\alpha-1)\int_\DD(\alpha-1)(1-|z|^2)^\alpha \mathrm{d}\mu(z)\\
	&=&M(\alpha-1)\to0,\quad \alpha\to1+.
	\end{eqnarray*}
Therefore 
\[
\|f\|_{A_\alpha^{p\alpha}}^{p\alpha}\to1,\quad \alpha\to1+.
\]
So $\lim_{\alpha\to1+}\|f\|_{A_\alpha^{p\alpha}}=1$. The rest of the lemma is obvious. This completes the proof.
\end{proof}

\begin{rem}
	From Lemma \ref{lem: limit is hardy norm}, it is easy to see that a positive answer to Question 3 implies Conjecture 2. The statement that $\frac{\partial}{\partial\alpha}N_f(\alpha)\leq0$ looks like a stronger statement than Conjecture 2. However, we are still optimistic enough to expect a positive answer. One of the evidences is the following. In \cite{BOSZ}, the authors proved a lemma (\cite[Lemma 2]{BOSZ}) which implies $\|f\|_{A_{k\alpha}^{2k\alpha}}\leq\|f\|_{A_\alpha^{2\alpha}}$ for any positive integer $k$, and used this lemma to prove \eqref{eqn: conj. 2} in the case when $\alpha>1$ is an integer. In Section \ref{sec: some evidences}, we will also provide some numerical evidences that support a positive answer to Question 3.	
\end{rem}

Denote $\Log z$ the single-valued branch of $\log z$ on $\CC\backslash\{z\in\RR: z\leq0\}$ such that $\Log 1=0$.
By direct computation, we have

\begin{prop}\label{prop: derivative}
	If $f\in\OOO^\ast$, then
	\begin{equation}\label{eqn: derivative}
	\frac{\partial}{\partial\alpha}N_f(\alpha)=\frac{\alpha-1}{2\alpha}N_f^{1-2\alpha}(\alpha)\bigg(-\frac{2}{\alpha-1}N_f^{2\alpha}(\alpha)\Log N_f(\alpha)+\frac{1}{(\alpha-1)^2}N_f^{2\alpha}(\alpha)+\I_f(\alpha)\bigg),
	\end{equation}
	where
	\begin{equation}\label{eqn: derivative I}
	\I_f(\alpha)=\int_{\DD}|f(z)|^{2\alpha}(1-|z|^2)^\alpha\Log\big(|f(z)|^2(1-|z|^2)\big)\mathrm{d}\mu(z).
	\end{equation}
\end{prop}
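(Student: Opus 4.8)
The plan is to reduce the whole identity to one logarithmic differentiation plus a single application of differentiation under the integral sign. First I would introduce $h(z)=|f(z)|^2(1-|z|^2)$, which is strictly positive on $\DD$ since $f\in\OOO^\ast$ has no zeros on $\overline{\DD}$; because $|f|$ is bounded above and below on $\overline{\DD}$, there are constants $0<c'\le C'$ with $c'(1-|z|^2)\le h(z)\le C'(1-|z|^2)$, so in particular $h\asymp(1-|z|^2)$. With this notation the quantity under the root becomes $G(\alpha):=N_f(\alpha)^{2\alpha}=(\alpha-1)\int_\DD h(z)^\alpha\,\mathrm{d}\mu(z)$, and the definition \eqref{eqn: derivative I} of $\I_f$ reads simply $\I_f(\alpha)=\int_\DD h(z)^\alpha\,\Log h(z)\,\mathrm{d}\mu(z)$. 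Note that $N_f(\alpha)$, $G(\alpha)$ and $h(z)$ are all positive reals, so every $\Log$ appearing is just the ordinary real logarithm.

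Next I would take logarithms, writing $\Log N_f(\alpha)=\tfrac{1}{2\alpha}\Log G(\alpha)$, and differentiate to obtain $\frac{N_f'(\alpha)}{N_f(\alpha)}=-\tfrac{1}{2\alpha^2}\Log G(\alpha)+\tfrac{1}{2\alpha}\frac{G'(\alpha)}{G(\alpha)}$. Substituting $\Log G(\alpha)=2\alpha\Log N_f(\alpha)$ collapses the first term to $-\tfrac{1}{\alpha}\Log N_f(\alpha)$, so the entire problem reduces to computing $G'(\alpha)$.

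The computation of $G'(\alpha)$ is where the only genuine work lies. Writing $h^\alpha=e^{\alpha\Log h}$ and differentiating formally gives $G'(\alpha)=\int_\DD h^\alpha\,\mathrm{d}\mu+(\alpha-1)\int_\DD h^\alpha\,\Log h\,\mathrm{d}\mu=\tfrac{1}{\alpha-1}G(\alpha)+(\alpha-1)\I_f(\alpha)$, where the first piece re-expresses $\int_\DD h^\alpha\,\mathrm{d}\mu=G(\alpha)/(\alpha-1)$ and the second is exactly $\I_f(\alpha)$. To legitimize pulling the derivative inside the integral I would fix a compact subinterval $[a,b]\subset(1,\infty)$ and produce an $\alpha$-uniform integrable majorant for $\partial_\alpha\!\big((\alpha-1)h^\alpha\big)=h^\alpha+(\alpha-1)h^\alpha\Log h$. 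Using the pointwise bound $h^\alpha\le h^a+h^b$ valid for $\alpha\in[a,b]$ together with $h\asymp(1-|z|^2)$ and $\mathrm{d}\mu(z)=(1-|z|^2)^{-2}\frac{\mathrm{d}x\mathrm{d}y}{\pi}$, the dominating integrand behaves near the boundary like $(1-|z|^2)^{a-2}\big|\log(1-|z|^2)\big|$, and in polar coordinates $\int_0^1(1-r^2)^{a-2}\big|\log(1-r^2)\big|\,r\,\mathrm{d}r<\infty$ precisely because $a>1$ (the logarithmic factor is harmless, being dominated by any negative power of $1-r^2$). This single estimate simultaneously shows that $\I_f(\alpha)$ is finite and supplies the majorant, so dominated convergence justifies the differentiation. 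This boundary singularity of $\Log h$ is the main obstacle; everything else is bookkeeping.

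Finally I would reassemble. Inserting $G'(\alpha)=\tfrac{1}{\alpha-1}G(\alpha)+(\alpha-1)\I_f(\alpha)$ and $G(\alpha)=N_f(\alpha)^{2\alpha}$ into the logarithmic-derivative identity and multiplying through by $N_f(\alpha)$ yields, writing $N_f=N_f(\alpha)$, $N_f'(\alpha)=-\tfrac{1}{\alpha}N_f\Log N_f+\tfrac{1}{2\alpha(\alpha-1)}N_f+\tfrac{\alpha-1}{2\alpha}N_f^{1-2\alpha}\,\I_f(\alpha)$. Factoring out $\tfrac{\alpha-1}{2\alpha}N_f^{1-2\alpha}$ and using $N_f^{1-2\alpha}\cdot N_f^{2\alpha}=N_f$ reproduces the three terms $-\tfrac{2}{\alpha-1}N_f^{2\alpha}\Log N_f$, $\tfrac{1}{(\alpha-1)^2}N_f^{2\alpha}$ and $\I_f(\alpha)$ inside the parentheses, which is exactly the asserted formula \eqref{eqn: derivative}.
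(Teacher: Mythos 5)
Your proof is correct. The paper offers no argument for this proposition beyond the phrase ``by direct computation,'' and your computation --- writing $N_f(\alpha)^{2\alpha}=(\alpha-1)\int_\DD h^\alpha\,\mathrm{d}\mu$ with $h=|f|^2(1-|z|^2)$, logarithmically differentiating, and evaluating $G'(\alpha)=\frac{1}{\alpha-1}G(\alpha)+(\alpha-1)\I_f(\alpha)$ --- is exactly the direct computation intended, with the additional merit that you justify the differentiation under the integral sign via the majorant $h^\alpha\le h^a+h^b$, the two-sided bound $h\asymp(1-|z|^2)$, and the integrability of $(1-|z|^2)^{a-2}\,\bigl|\log(1-|z|^2)\bigr|$ for $a>1$, a point the paper leaves implicit.
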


Suppose $f$ is an outer function, then $f^\alpha$ makes sense and is also an outer function. We have the equation 
\[
\|f\|_{A_\alpha^{2\alpha}}^{\alpha}=\|f^\alpha\|_{A_\alpha^2}.
\]
Also, from \eqref{eqn: derivative} and \eqref{eqn: derivative I}, we see that the value of $\frac{\partial}{\partial\alpha}N_f(\alpha)$ depends only on the function $f^\alpha$. This allows us to consider $f^\alpha$ instead of $f$ and take advantage of the reproducing kernel Hilbert space structure of $A_\alpha^2$.

The main goal of this section is to prove the following theorem. 

\begin{thm}\label{thm: discrete formula}
	Suppose $f\in\OOO^\ast$ and $\alpha>1$. Suppose there exists $k$ points, $w_1,\cdots,w_k$ in $\DD$ and $k$ numbers $c_1,\cdots,c_k\in\CC$ such that
	\[
	f^\alpha(z)=\sum_{i=1}^kc_i\frac{1}{(1-\overline{w_i}z)^\alpha},\quad z\in\DD.
	\]
	Then
	\begin{equation}\label{eqn: discrete derivative}
	\frac{\partial}{\partial\alpha}N_f(\alpha)=\frac{1}{2\alpha^2}N_f^{1-2\alpha}(\alpha)D_f(\alpha),
	\end{equation}
	where
	\begin{equation}\label{eqn: Df=...}
	D_f(\alpha)=\sum_{i,j=1}^kc_i\overline{c_j}\frac{1}{(1-\overline{w_i}w_j)^\alpha}\bigg(\overline{\log f^\alpha(w_i)}+\log f^\alpha(w_j)+\alpha\Log(1-\overline{w_i}w_j)-2\alpha\Log N_f(\alpha)\bigg).
	\end{equation}
	For $\log f^\alpha(w_i)$, we fix a holomorphic function $g$ such that $f^\alpha(z)=e^{g(z)}$ and let $\log f^\alpha(w_i)=g(w_i)$. (since $f^\alpha$ is outer, such function $g$ exists.)
\end{thm}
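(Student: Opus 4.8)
The plan is to start from the exact formula for $\frac{\partial}{\partial\alpha}N_f(\alpha)$ given in Proposition \ref{prop: derivative}, and to rewrite the quantity inside the large parentheses using the hypothesis that $f^\alpha(z)=\sum_{i=1}^k c_i K_{w_i,\alpha}(z)$, where $K_{w,\alpha}(z)=(1-\overline{w}z)^{-\alpha}$ is the reproducing kernel of $A_\alpha^2$. The reason this should work is that every ingredient of the derivative formula (the norm, the logarithmic term, and especially the integral $\I_f(\alpha)$) becomes a bilinear expression in the $c_i$ once we substitute $|f^\alpha|^2 = f^\alpha \overline{f^\alpha}$ and expand using the reproducing property. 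I would first record the two elementary identities I need: the reproducing property $\langle h, K_{w,\alpha}\rangle_{A_\alpha^2}=h(w)$ for $h\in A_\alpha^2$, and the inner product of two kernels $\langle K_{w_j,\alpha}, K_{w_i,\alpha}\rangle_{A_\alpha^2}=(1-\overline{w_i}w_j)^{-\alpha}$, together with the bookkeeping identity $N_f^{2\alpha}(\alpha)=\|f^\alpha\|_{A_\alpha^2}^2=\sum_{i,j}c_i\overline{c_j}(1-\overline{w_i}w_j)^{-\alpha}$.

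First I would handle the two ``easy'' terms. Writing $\|f^\alpha\|^2=\sum_{i,j}c_i\overline{c_j}(1-\overline{w_i}w_j)^{-\alpha}$ immediately expresses $N_f^{2\alpha}(\alpha)$, and hence both the $\frac{1}{(\alpha-1)^2}N_f^{2\alpha}(\alpha)$ term and the $-\frac{2}{\alpha-1}N_f^{2\alpha}(\alpha)\Log N_f(\alpha)$ term, as double sums over $i,j$ with summand $c_i\overline{c_j}(1-\overline{w_i}w_j)^{-\alpha}$ times a scalar. This already produces the $-2\alpha\Log N_f(\alpha)$ contribution inside $D_f(\alpha)$ once one clears the factor $\frac{\alpha-1}{2\alpha}$ against the global prefactors, so I would carry the constants carefully to see that these combine correctly with the front factor $\frac{1}{2\alpha^2}N_f^{1-2\alpha}(\alpha)$ in the target formula \eqref{eqn: discrete derivative}. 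The main analytic work is the integral $\I_f(\alpha)$.

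For $\I_f(\alpha)$ I would split the logarithm as $\Log(|f(z)|^2(1-|z|^2))=\log f^\alpha(z)/\alpha$-type pieces; more precisely, since $f^\alpha=e^{g}$ with $g$ holomorphic, one has $\log|f^\alpha(z)|^2 = g(z)+\overline{g(z)}$ and $|f(z)|^{2\alpha}=|f^\alpha(z)|^2=f^\alpha(z)\overline{f^\alpha(z)}$, so the integrand factors into $f^\alpha(z)\overline{f^\alpha(z)}(1-|z|^2)^\alpha$ times $\frac{1}{\alpha}\big(g(z)+\overline{g(z)}\big)+\Log(1-|z|^2)$. Substituting $f^\alpha=\sum_i c_i K_{w_i,\alpha}$ and $\overline{f^\alpha}=\sum_j\overline{c_j}\,\overline{K_{w_j,\alpha}}$ turns each resulting integral into a weighted $A_\alpha^2$-inner product of the form $\int_\DD K_{w_i,\alpha}(z)\overline{K_{w_j,\alpha}(z)}\,\varphi(z)\,(\alpha-1)(1-|z|^2)^\alpha\,\mathrm d\mu(z)$ for $\varphi\in\{g,\overline{g},\Log(1-|z|^2)\}$. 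The key computation is to evaluate these by viewing $\varphi K_{w_j,\alpha}$ or $\overline{\varphi K_{w_i,\alpha}}$ against the reproducing kernel: for the holomorphic factor $g$, one uses $\langle g\,K_{w_j,\alpha}, K_{w_i,\alpha}\rangle = g(w_i)K_{w_i,\alpha}(w_j)$ by the reproducing property applied to the holomorphic function $g\,K_{w_j,\alpha}$, which produces exactly the $\overline{\log f^\alpha(w_i)}=\overline{g(w_i)}$ and $\log f^\alpha(w_j)=g(w_j)$ terms after taking the conjugate pairing for the antiholomorphic factor. The term with $\Log(1-|z|^2)$ should reproduce the $\alpha\,\Log(1-\overline{w_i}w_j)$ contribution; I expect this to follow from differentiating the kernel-inner-product identity $\int_\DD K_{w_i,\alpha}\overline{K_{w_j,\alpha}}(\alpha-1)(1-|z|^2)^\alpha\,\mathrm d\mu=(1-\overline{w_i}w_j)^{-\alpha}$ with respect to $\alpha$, since $\frac{\partial}{\partial\alpha}(1-|z|^2)^\alpha=(1-|z|^2)^\alpha\Log(1-|z|^2)$ on the left and the right-hand side differentiates to $(1-\overline{w_i}w_j)^{-\alpha}\big(\Log(1-\overline{w_i}w_j)-\tfrac{1}{\alpha-1}\big)$, up to the normalization factor $(\alpha-1)$.

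The step I expect to be the main obstacle is precisely this last evaluation involving $\Log(1-|z|^2)$: one must justify differentiating under the integral sign in the kernel identity (or, equivalently, evaluate the weighted integral $\int_\DD (1-\overline{w_i}z)^{-\alpha}(1-\overline{z}w_j)^{-\alpha}\Log(1-|z|^2)(1-|z|^2)^\alpha\,\mathrm d\mu(z)$ directly), and to track the stray $\frac{1}{\alpha-1}$ terms so that they cancel against the $\frac{1}{(\alpha-1)^2}N_f^{2\alpha}$ term from Proposition \ref{prop: derivative}. The differentiation-under-the-integral approach is clean because $f\in\OOO^\ast$ forces $|f|$ to be bounded above and below on $\overline{\DD}$ and the integrand decays, so dominated convergence applies on compact $\alpha$-intervals; the bookkeeping of constants is then the only delicate point. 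Once all three pieces of $\I_f(\alpha)$ are assembled and combined with the two easy terms, collecting the coefficient of $c_i\overline{c_j}(1-\overline{w_i}w_j)^{-\alpha}$ should yield exactly the bracket $\overline{\log f^\alpha(w_i)}+\log f^\alpha(w_j)+\alpha\Log(1-\overline{w_i}w_j)-2\alpha\Log N_f(\alpha)$, and factoring the global constants gives \eqref{eqn: discrete derivative}.
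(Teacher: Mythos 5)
Your overall skeleton --- start from Proposition \ref{prop: derivative}, expand $|f^\alpha|^2$ bilinearly in the kernels, evaluate the holomorphic-logarithm part by the reproducing property, and reduce the $\Log(1-|z|^2)$ part to a kernel computation --- is sound, and your treatment of the easy norm terms and of the part involving $g=\log f^\alpha$ coincides with the paper's proof (the paper pairs $f^\alpha\log f^\alpha$ against each kernel; you pair $gK_{w_i,\alpha}$ against $K_{w_j,\alpha}$; this is the same reproducing-property argument). The genuine gap is in your key step, the evaluation of
\begin{equation*}
\III_{ij}=\int_\DD\frac{(1-|z|^2)^\alpha\Log(1-|z|^2)}{(1-\overline{w_i}z)^\alpha(1-\overline{z}w_j)^\alpha}\,\mathrm{d}\mu(z).
\end{equation*}
You propose to obtain it by differentiating the identity $\int_\DD K_{w_i,\alpha}\overline{K_{w_j,\alpha}}\,(\alpha-1)(1-|z|^2)^\alpha\,\mathrm{d}\mu=(1-\overline{w_i}w_j)^{-\alpha}$ in $\alpha$, and your stated justification accounts only for $\frac{\partial}{\partial\alpha}(1-|z|^2)^\alpha$ on the left. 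But the kernels themselves depend on $\alpha$: the integrand is $(\alpha-1)\exp\big(\alpha(\Log(1-|z|^2)-\Log(1-\overline{w_i}z)-\Log(1-\overline{z}w_j))\big)$, so the correct derivative of the left-hand side is
\begin{equation*}
\frac{(1-\overline{w_i}w_j)^{-\alpha}}{\alpha-1}+(\alpha-1)\III_{ij}-\int_\DD\frac{(\alpha-1)(1-|z|^2)^\alpha\big(\Log(1-\overline{w_i}z)+\Log(1-\overline{z}w_j)\big)}{(1-\overline{w_i}z)^\alpha(1-\overline{z}w_j)^\alpha}\,\mathrm{d}\mu(z),
\end{equation*}
and the two extra integrals are not negligible: each evaluates, again by the reproducing property (applied to the holomorphic function $\Log(1-\overline{w_i}\cdot)K_{w_i,\alpha}$, and to the conjugate of $\Log(1-\overline{w_j}\cdot)K_{w_j,\alpha}$), to $(1-\overline{w_i}w_j)^{-\alpha}\Log(1-\overline{w_i}w_j)$. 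Dropping them, as your sketch does, yields $\III_{ij}=-\frac{1}{\alpha-1}(1-\overline{w_i}w_j)^{-\alpha}\Log(1-\overline{w_i}w_j)-\frac{1}{(\alpha-1)^2}(1-\overline{w_i}w_j)^{-\alpha}$, i.e.\ the \emph{wrong sign} on the logarithmic term, which propagates to $-\alpha\Log(1-\overline{w_i}w_j)$ instead of $+\alpha\Log(1-\overline{w_i}w_j)$ in the bracket of \eqref{eqn: Df=...}. This is not the ``bookkeeping of stray $\frac{1}{\alpha-1}$ terms'' you flagged as the delicate point; it is a missing piece of the differentiation itself.

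The good news is that once the two kernel-derivative terms are included and evaluated (by exactly the trick you already use for $g$), the computation closes: one gets $(\alpha-1)\III_{ij}=(1-\overline{w_i}w_j)^{-\alpha}\Log(1-\overline{w_i}w_j)-\frac{1}{\alpha-1}(1-\overline{w_i}w_j)^{-\alpha}$, which is precisely the paper's \eqref{IIIij=...}, and the rest of your assembly then reproduces \eqref{eqn: discrete derivative} and \eqref{eqn: Df=...}. With that repair your route is genuinely different from the paper's, which computes $\III_{ij}$ by splitting off $\Log|1-\overline{z}w_j|^2$, applying Stokes's theorem on $\DD\setminus\{|z-w_j|\le\varepsilon\}$ to a one-form singular at $w_j$, and extracting a residue as $\varepsilon\to0$. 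Your differentiation-under-the-integral argument --- legitimate by dominated convergence locally uniformly in $\alpha$, since $|1-\overline{w_i}z|$ is bounded below on $\DD$ and $(1-|z|^2)^{\alpha-2}\big(1+|\Log(1-|z|^2)|\big)$ is integrable for $\alpha>1$ --- avoids the Stokes/residue machinery entirely, at the price of noticing that \emph{all three} $\alpha$-dependences in the kernel identity must be differentiated simultaneously.
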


Before proving Theorem \ref{thm: discrete formula}, let us use an example to illustrate our idea.

\begin{exam}\label{example}
It is well-known (and also implied by the proof of \cite[Corollary 3]{BOSZ}) that for integer-valued $\alpha$, the equation in \eqref{eqn: conj. 2} holds if and only if $f(z)=\frac{c}{1-\overline{w}z}$ for some $c\in\CC$ and $w\in\DD$. Indeed, if $f(z)=\frac{1}{1-\overline{w}z}$, then one can compute directly that $\frac{\partial}{\partial\alpha}N_f(\alpha)\equiv0$.
	
	By Proposition \ref{prop: derivative}, in order to compute $\frac{\partial}{\partial\alpha}N_f(\alpha)$, one needs to find out $\I_f(\alpha)$. Applying the M\"{o}bius transform $\lambda=\varphi_w(z)=\frac{w-z}{1-\overline{w}z}$, we get
	\begin{eqnarray*}
		\I_f(\alpha)&=&\int_\DD\bigg(\frac{1-|z|^2}{|1-\overline{w}z|^2}\bigg)^\alpha\Log\frac{1-|z|^2}{|1-\overline{w}z|^2}\mathrm{d}\mu(z)\\
		&=&\int_{\DD}\bigg(\frac{1-|\lambda|^2}{1-|w|^2}\bigg)^\alpha\Log\frac{1-|\lambda|^2}{1-|w|^2}\mathrm{d}\mu(\lambda)\\
		&=&(1-|w|^2)^{-\alpha}\int_\DD(1-|\lambda|^2)^{\alpha-2}\Log(1-|\lambda|^2)\frac{\mathrm{d}m(\lambda)}{\pi}\\
		&&-(1-|w|^2)^{-\alpha}\Log(1-|w|^2)\int_\DD(1-|\lambda|^2)^{\alpha-2}\frac{\mathrm{d}m(\lambda)}{\pi}.
	\end{eqnarray*}
Here $dm$ denotes the Lebesgue measure.
Using the polar coordinates and applying an integration by parts, we have
\[
\int_\DD(1-|\lambda|^2)^{\alpha-2}\Log(1-|\lambda|^2)\frac{\mathrm{d}m(\lambda)}{\pi}=-(\alpha-1)^{-2}.
\]
Similar computations give
\[
\int_\DD(1-|\lambda|^2)^{\alpha-2}\frac{\mathrm{d}m(\lambda)}{\pi}=\frac{1}{\alpha-1}
\]
and
\[
N_f^{2\alpha}(\alpha)=(1-|w|^2)^{-\alpha}.
\]
Thus
\[
I_f(\alpha)=-(\alpha-1)^{-2}(1-|w|^2)^{-\alpha}-(\alpha-1)^{-1}(1-|w|^2)^{-\alpha}\Log(1-|w|^2).
\]
From this and Proposition \ref{prop: derivative}, it is easy to see that $\frac{\partial}{\partial\alpha}N_f(\alpha)\equiv0$.
\end{exam}

By Example \ref{example}, $\frac{\partial}{\partial\alpha}N_f(\alpha)=0$ whenever $f^\alpha$ is a constant multiple of a reproducing kernel of $A_\alpha^2$. The linear span of reproducing kernels form a dense set in $A_\alpha^2$. This explains our reason of considering such functions in Theorem \ref{thm: discrete formula}.

Next, let us give the proof of Theorem \ref{thm: discrete formula}.
\begin{proof}[\textbf{Proof of Theorem \ref{thm: discrete formula}}]
	Suppose $f\in\OOO^\ast$ and 
\[
f^\alpha(z)=\sum_{i=1}^kc_i\frac{1}{(1-\overline{w_i}z)^\alpha}.
\]
As in Example \ref{example}, in order to calculate $\frac{\partial}{\partial\alpha}N_f(\alpha)$, we need to find out $\I_f(\alpha)$ as defined in \eqref{eqn: derivative I}. Compared with Example \ref{example}, the main difficulty here is that we can not use the M\"{o}bius transform. We will get around by applying the Stoke's Theorem and the Residue Theorem.

Let 
\begin{equation}\label{eqn: II}
\II=\frac{1}{\alpha}\int_\DD|f^\alpha(z)|^2(1-|z|^2)^\alpha\Log|f^\alpha(z)|^2\mathrm{d}\mu(z)
\end{equation}
and
\begin{equation}\label{eqn: III}
\III=\int_\DD|f^\alpha(z)|^2(1-|z|^2)^\alpha\Log(1-|z|^2)\mathrm{d}\mu(z).
\end{equation}

By \eqref{eqn: derivative I}, it is easy to see that 
\begin{equation}\label{I=II+III}
\I_f(\alpha)=\II+\III.	
\end{equation}

Taking advantage of the fact that $\log f^\alpha(z)$ is a holomorphic function in $\DD$, we have
\begin{eqnarray}
\nonumber \II&=&\frac{2}{\alpha}\mathrm{Re}\bigg(\int_\DD|f^\alpha(z)|^2(1-|z|^2)^\alpha\log f^\alpha(z)\mathrm{d}\mu(z)\bigg)\\ \nonumber
&=&\frac{2}{\alpha(\alpha-1)}\mathrm{Re}\sum_{j=1}^k\overline{c_j}\int_\DD\frac{1}{(1-w_j\overline{z})^\alpha}f^\alpha(z)\log f^\alpha(z)(\alpha-1)(1-|z|^2)^\alpha \mathrm{d}\mu(z)\\
\nonumber&=&\frac{2}{\alpha(\alpha-1)}\mathrm{Re}\sum_{j=1}^k\overline{c_j}f^\alpha(w_j)\log f^\alpha(w_j)\\
\label{eqn: II =...}&=&\frac{1}{\alpha(\alpha-1)}\sum_{i,j=1}^kc_i\overline{c_j}\frac{1}{(1-\overline{w_i}w_j)^\alpha}\bigg(\overline{\log f^\alpha(w_i)}+\log f^\alpha(w_j)\bigg).
\end{eqnarray}

Next, we calculate $\III$.
\begin{eqnarray}
	\nonumber \III&=&\int_\DD|f^\alpha(z)|^2(1-|z|^2)^\alpha\Log(1-|z|^2)\mathrm{d}\mu(z)\\
	\nonumber&=&\sum_{i,j=1}^kc_i\overline{c_j}\int_\DD\frac{1}{(1-\overline{w_i}z)^\alpha}\frac{1}{(1-\overline{z}w_j)^\alpha}(1-|z|^2)^\alpha\Log(1-|z|^2)\mathrm{d}\mu(z)\\
	\label{III=sumIIIij}&=&\sum_{i,j=1}^kc_i\overline{c_j}\III_{ij}.
\end{eqnarray}
Here
\begin{equation}\label{eqn: IIIij}
\III_{ij}=\int_\DD\frac{1}{(1-\overline{w_i}z)^\alpha}\frac{1}{(1-\overline{z}w_j)^\alpha}(1-|z|^2)^\alpha\Log(1-|z|^2)\mathrm{d}\mu(z).
\end{equation}

For $i,j=1,\cdots,k$, define
\begin{equation}\label{eqn: IVij}
\IV_{ij}=\int_\DD\frac{1}{(1-\overline{w_i}z)^\alpha}\frac{1}{(1-\overline{z}w_j)^\alpha}(1-|z|^2)^\alpha\Log\frac{1-|z|^2}{|1-\overline{z}w_j|^2}\mathrm{d}\mu(z)
\end{equation}
and
\begin{equation}\label{eqn: Vij}
\V_{ij}=\int_\DD\frac{1}{(1-\overline{w_i}z)^\alpha}\frac{1}{(1-\overline{z}w_j)^\alpha}(1-|z|^2)^\alpha\Log|1-\overline{z}w_j|^2\mathrm{d}\mu(z).
\end{equation}
Then 
\begin{equation}\label{IIIij=IVij+Vij}
\III_{ij}=\IV_{ij}+\V_{ij}.
\end{equation}

Since $\overline{\Log z}=\Log\overline{z}$, we have
\begin{eqnarray}
	\nonumber \V_{ij}&=&\int_\DD\frac{1}{(1-\overline{w_i}z)^\alpha}\frac{1}{(1-\overline{z}w_j)^\alpha}\Log(1-\overline{z}w_j)(1-|z|^2)^\alpha \mathrm{d}\mu(z)\\
	\nonumber&&+\int_\DD\frac{1}{(1-\overline{w_i}z)^\alpha}\frac{1}{(1-\overline{z}w_j)^\alpha}\Log(1-\overline{w_j}z)(1-|z|^2)^\alpha \mathrm{d}\mu(z)\\
	\nonumber&=&\frac{1}{\alpha-1}\bigg(\frac{1}{(1-\overline{w_i}w_j)^\alpha}\Log(1-\overline{w_i}w_j)+\frac{1}{(1-\overline{w_i}w_j)^\alpha}\Log(1-|w_j|^2)\bigg)\\
	\label{ Vij=...}&=&\frac{1}{\alpha-1}\frac{1}{(1-\overline{w_i}w_j)^\alpha}\bigg(\Log(1-\overline{w_i}w_j)+\Log(1-|w_j|^2)\bigg).
\end{eqnarray}

It remains to calculate $\IV_{ij}$. Let
\[
\varphi_{ij}(z)=\frac{1}{\alpha-1}\frac{1}{w_j-z}\frac{(1-|z|^2)^{\alpha-1}}{(1-\overline{z}w_j)^{\alpha}}\frac{1}{(1-\overline{w_i}z)^\alpha},\quad z\in\DD, z\neq w_j.
\]
and
\[
\psi_{ij}(z)=\Log\frac{1-|z|^2}{|1-\overline{z}w_j|^2}.
\]
By direct computation, we have
\[
\bar{\partial}\varphi_{ij}(z)=\frac{1}{(1-\overline{w_i}z)^\alpha}\frac{(1-|z|^2)^{\alpha-2}}{(1-\overline{z}w_j)^\alpha}
\]
and
\[
\bar{\partial}\psi_{ij}(z)=\frac{w_j-z}{(1-|z|^2)(1-\overline{z}w_j)}.
\]
Therefore
\begin{equation}\label{eqn: IVij=...1}
\IV_{ij}=\frac{1}{\pi}\int_\DD\bar{\partial}\varphi_{ij}(z)\psi_{ij}(z)\mathrm{d}x\mathrm{d}y.
\end{equation}

For any $\varepsilon>0$ sufficiently small, define
\[
\DD_{\varepsilon, j}:=\{z\in\DD: |z-w_j|>\varepsilon\}.
\]
Define the one-form $\omega=\varphi_{ij}(z)\psi_{ij}(z)\mathrm{d}z$. Then
\[
\mathrm{d}\omega=-(\bar{\partial}\varphi_{ij}\psi_{ij}+\varphi_{ij}\bar{\partial}\psi_{ij})\mathrm{d}z\wedge \mathrm{d}\bar{z}=2\sqrt{-1}(\bar{\partial}\varphi_{ij}\psi_{ij}+\varphi_{ij}\bar{\partial}\psi_{ij})\mathrm{d}x\wedge \mathrm{d}y.
\]
Applying the Stokes's Theorem on $\DD_{\varepsilon,j}$, we get 
\begin{eqnarray*}
&&\frac{1}{\pi}\int_{\DD_{\varepsilon,j}}(\bar{\partial}\varphi_{ij}\psi_{ij}+\varphi_{ij}\bar{\partial}\psi_{ij})\mathrm{d}x\wedge \mathrm{d}y\\
&=&\frac{1}{2\pi\sqrt{-1}}\bigg(\int_\TT \varphi_{ij}\psi_{ij}\mathrm{d}z-\int_{\{z: |z-w_j|=\varepsilon\}}\varphi_{ij}\psi_{ij}\mathrm{d}z\bigg)\\
&=&-\frac{1}{2\pi\sqrt{-1}}\int_{\{z: |z-w_j|=\varepsilon\}}\varphi_{ij}\psi_{ij}\mathrm{d}z.
\end{eqnarray*}
The second equality is because $\varphi_{ij}\psi_{ij}=0$ on the unit circle $\TT$.

Therefore
\begin{eqnarray}
	\nonumber \IV_{ij}&=&\lim_{\varepsilon\to0}\frac{1}{\pi}\int_{\DD_{\varepsilon,j}}\bar{\partial}\varphi_{ij}\psi_{ij}\mathrm{d}x\wedge \mathrm{d}y\\
	\nonumber&=&-\lim_{\varepsilon\to0}\bigg(\frac{1}{\pi}\int_{\DD_{\varepsilon,j}}\varphi_{ij}\bar{\partial}\psi_{ij}\mathrm{d}x\wedge \mathrm{d}y+\frac{1}{2\pi\sqrt{-1}}\int_{\{z: |z-w_j|=\varepsilon\}}\varphi_{ij}\psi_{ij}\mathrm{d}z\bigg)\\
	\nonumber&=&-\int_\DD\frac{1}{\alpha-1}\frac{(1-|z|^2)^\alpha}{(1-\overline{w_i}z)^\alpha(1-\overline{z}w_j)^\alpha}\mathrm{d}\mu(z)\\
	\nonumber&&-\lim_{\varepsilon\to0}\frac{1}{2\pi\sqrt{-1}}\int_{\{z: |z-w_j|=\varepsilon\}}\frac{1}{\alpha-1}\frac{1}{w_j-z}\frac{(1-|z|^2)^{\alpha-1}}{(1-\overline{z}w_j)^{\alpha-1}}\frac{1}{(1-\overline{w_i}z)^\alpha}\Log\frac{1-|z|^2}{|1-\overline{z}w_j|^2}\mathrm{d}z\\
	\label{IVij=...}&=&-\frac{1}{(\alpha-1)^2}\frac{1}{(1-\overline{w_i}w_j)^\alpha}+\VI_{ij},
\end{eqnarray}
where
\[
\VI_{ij}=-\lim_{\varepsilon\to0}\frac{1}{2\pi\sqrt{-1}}\int_{\{z: |z-w_j|=\varepsilon\}}\frac{1}{\alpha-1}\frac{1}{w_j-z}\frac{(1-|z|^2)^{\alpha-1}}{(1-\overline{z}w_j)^{\alpha-1}}\frac{1}{(1-\overline{w_i}z)^\alpha}\Log\frac{1-|z|^2}{|1-\overline{z}w_j|^2}\mathrm{d}z.
\]
To calculate $\VI_{ij}$, notice that 
\[
\frac{(1-|z|^2)^{\alpha-1}}{(1-\overline{z}w_j)^{\alpha-1}}\frac{1}{(1-\overline{w_i}z)^\alpha}\Log\frac{1-|z|^2}{|1-\overline{z}w_j|^2}\to -\frac{1}{(1-\overline{w_i}w_j)^\alpha}\Log(1-|w_j|^2), \quad z\to w_j.
\]
Standard estimates will give us
\begin{eqnarray}
\nonumber \VI_{ij}&=&	\frac{1}{\alpha-1}\frac{1}{(1-\overline{w_i}w_j)^\alpha}\Log(1-|w_j|^2)\mathrm{Res}(\frac{1}{w_j-z},w_j)\\
\label{VIij=...}&=&-\frac{1}{\alpha-1}\frac{1}{(1-\overline{w_i}w_j)^\alpha}\Log(1-|w_j|^2).
\end{eqnarray}
By \eqref{IVij=...} and \eqref{VIij=...}, we have
\begin{equation}\label{IVij=...final}
\IV_{ij}=-\frac{1}{(\alpha-1)^2}\frac{1}{(1-\overline{w_i}w_j)^\alpha}-\frac{1}{\alpha-1}\frac{1}{(1-\overline{w_i}w_j)^\alpha}\Log(1-|w_j|^2).
\end{equation}
By \eqref{IIIij=IVij+Vij}, \eqref{IVij=...final} and \eqref{VIij=...}, we get
\begin{equation}\label{IIIij=...}
\III_{ij}=-\frac{1}{(\alpha-1)^2}\frac{1}{(1-\overline{w_i}w_j)^\alpha}+\frac{1}{\alpha-1}\frac{1}{(1-\overline{w_i}w_j)^\alpha}\Log(1-\overline{w_i}w_j).
\end{equation}
Then combining \eqref{I=II+III}, \eqref{eqn: II =...}, \eqref{III=sumIIIij} and \eqref{IIIij=...}, we have
\begin{eqnarray}
\nonumber \I_f(\alpha)&=&\frac{1}{\alpha(\alpha-1)}\sum_{i,j=1}^kc_i\overline{c_j}\frac{1}{(1-\overline{w_i}w_j)^\alpha}\bigg(\overline{\log f^\alpha(w_i)}+\log f^\alpha(w_j)\bigg)\\
\nonumber&&-\frac{1}{(\alpha-1)^2}\sum_{i,j=1}^k\overline{c_i}c_j\frac{1}{(1-\overline{w_i}w_j)^\alpha}\\
\nonumber&&-\frac{1}{\alpha-1}\sum_{i,j=1}^k\overline{c_i}c_j\frac{1}{(1-\overline{w_i}w_j)^\alpha}\Log(1-\overline{w_i}w_j)\\
\nonumber&=&\frac{1}{\alpha(\alpha-1)}\sum_{i,j=1}^kc_i\overline{c_j}\frac{1}{(1-\overline{w_i}w_j)^\alpha}\bigg(\overline{\log f^\alpha(w_i)}+\log f^\alpha(w_j)-\alpha\Log(1-\overline{w_i}w_j)\bigg)\\
\label{I=...}&&-\frac{1}{(\alpha-1)^2}N_f^{2\alpha}(\alpha).
\end{eqnarray}
The last equality is because
\begin{equation}\label{N=...}
N_f^{2\alpha}(\alpha)=\|f^\alpha\|_{A_\alpha^2}^2=\langle f^\alpha, f^\alpha\rangle_{A_\alpha^2}=\sum_{i,j=1}^k\overline{c_i}c_j\frac{1}{(1-\overline{w_i}w_j)^\alpha}.
\end{equation}

Finally, plugging in \eqref{I=...} and \eqref{N=...} into \eqref{eqn: derivative}, we get \eqref{eqn: discrete derivative} and \eqref{eqn: Df=...}. This completes the proof.
\end{proof} 

\begin{rem}\label{rem: f hardy norm equals 1}
In \cite{BBSSZ}, the authors raised the question whether $N_f(\alpha)^{2\alpha}=\|f\|_{A_\alpha^{2\alpha}}^{2\alpha}$ is non-increasing in $\alpha$ given that $\|f\|_{H^2}=1$. Using our method, we can also compute the derivative $\frac{\partial}{\partial\alpha}N_f(\alpha)^{2\alpha}$. In fact, by direct computation, we get
\[
\frac{\partial}{\partial\alpha}N_f(\alpha)^{2\alpha}=(\alpha-1)^{-1}N_f(\alpha)^{2\alpha}+(\alpha-1)I_f(\alpha).
\]
By \eqref{I=...}, if $f\in\OOO^\ast$ and $f^\alpha(z)=\sum_{i=1}^kc_i\frac{1}{(1-\overline{w_i}z)^\alpha}$, we have 
\begin{align*}
I_f(\alpha)=&\frac{1}{\alpha(\alpha-1)}\sum_{i,j=1}^kc_i\overline{c_j}\frac{1}{(1-\overline{w_i}w_j)^\alpha}\bigg(\overline{\log f^\alpha(w_i)}+\log f^\alpha(w_j)-\alpha\Log(1-\overline{w_i}w_j)\bigg)\\
&-\frac{1}{(\alpha-1)^2}N_f^{2\alpha}(\alpha).
\end{align*}
Thus
\begin{equation}\label{eqn: derivative of norm power}
\frac{\partial}{\partial\alpha}N_f(\alpha)^{2\alpha}=\frac{1}{\alpha}\sum_{i,j=1}^kc_i\overline{c_j}\frac{1}{(1-\overline{w_i}w_j)^\alpha}\bigg(\overline{\log f^\alpha(w_i)}+\log f^\alpha(w_j)-\alpha\Log(1-\overline{w_i}w_j)\bigg).
\end{equation}
Using \eqref{eqn: derivative of norm power}, one can easily check that if we drop the condition $\|f\|_{H^2}=1$, then there exists $f$ such that $N_f(\alpha)^{2\alpha}$ is increasing.
\end{rem}

An immediate consequence of Theorem \ref{thm: discrete formula} is the following.

\begin{thm}\label{thm: all positive}
	Suppose $f\in\OOO^\ast$, $\alpha>1$ and $f^\alpha=\sum_{i=1}^kc_i\frac{1}{(1-\overline{w_i}z)^\alpha}$. Suppose further that set of points $\{w_1,\cdots, w_k\}$ belong to a single real line, and that $c_i\geq0, i=1\cdots,k$. Then
	\[
	\frac{\partial}{\partial\alpha}N_f(\alpha)\leq0.
	\]
\end{thm}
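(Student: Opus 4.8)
The plan is to invoke the discrete formula of Theorem \ref{thm: discrete formula}, reduce the claim to the sign of $D_f(\alpha)$, and then recognize the resulting sum as a multiple of a Jensen-type inequality for the concave function $\log$. By Theorem \ref{thm: discrete formula} we have $\frac{\partial}{\partial\alpha}N_f(\alpha)=\frac{1}{2\alpha^2}N_f^{1-2\alpha}(\alpha)D_f(\alpha)$, and since the prefactor $\frac{1}{2\alpha^2}N_f^{1-2\alpha}(\alpha)$ is strictly positive, it suffices to prove $D_f(\alpha)\le 0$. Because the M\"obius-invariant measure defining $N_f$ is rotation invariant, $N_f$ (hence its $\alpha$-derivative) is unchanged if we replace $f(z)$ by $f(e^{i\theta}z)$, an operation that sends each $w_i$ to $e^{-i\theta}w_i$ while preserving the representation $f^\alpha=\sum_i c_i(1-\overline{w_i}z)^{-\alpha}$. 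As the $w_i$ lie on a line through the origin, a suitable rotation lets me assume $w_1,\dots,w_k\in(-1,1)$. With all $w_i$ real and all $c_i\ge 0$, every quantity in $D_f(\alpha)$ is real and positive: $1-\overline{w_i}w_j=1-w_iw_j\in(0,2)$, $f^\alpha(w_i)=\sum_j c_j(1-w_iw_j)^{-\alpha}>0$, and $N_f(\alpha)>0$; in particular $\overline{\log f^\alpha(w_i)}=\log f^\alpha(w_i)$ and all logarithms become ordinary real logarithms.

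The key step is an algebraic repackaging of $D_f(\alpha)$. I would set $a_{ij}=c_ic_j(1-w_iw_j)^{-\alpha}\ge 0$ and $p_i=c_if^\alpha(w_i)\ge 0$. A direct computation gives $\sum_j a_{ij}=p_i$ and, by \eqref{N=...}, $S:=\sum_{i,j}a_{ij}=\sum_i p_i=N_f^{2\alpha}(\alpha)$. Then, using $2\alpha\Log N_f(\alpha)=\log S$ together with the identity $\alpha\log(1-w_iw_j)=\log c_i+\log c_j-\log a_{ij}$ and $\log f^\alpha(w_i)+\log c_i=\log p_i$, one checks that the bracket in \eqref{eqn: Df=...} equals $\log\!\big(p_ip_j/(a_{ij}S)\big)$, so that
\[
D_f(\alpha)=\sum_{i,j=1}^k a_{ij}\log\frac{p_ip_j}{a_{ij}S},
\]
where terms with $c_i=0$ are simply omitted so that all logarithms are of positive quantities.

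Finally I would apply Jensen's inequality. Since $S>0$, the weights $a_{ij}/S$ are nonnegative and sum to $1$, so concavity of $\log$ yields
\[
\frac{1}{S}D_f(\alpha)=\sum_{i,j}\frac{a_{ij}}{S}\log\frac{p_ip_j}{a_{ij}S}\le\log\bigg(\sum_{i,j}\frac{a_{ij}}{S}\cdot\frac{p_ip_j}{a_{ij}S}\bigg)=\log\frac{\big(\sum_i p_i\big)^2}{S^2}=\log 1=0,
\]
whence $D_f(\alpha)\le 0$ and therefore $\frac{\partial}{\partial\alpha}N_f(\alpha)\le 0$.

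The main obstacle is the algebraic identification of the bracket in \eqref{eqn: Df=...} with $\log\!\big(p_ip_j/(a_{ij}S)\big)$, which is precisely what converts the expression into a transparent application of Jensen's inequality. This step uses both hypotheses in an essential way: reality of the $w_i$ makes every logarithm real and lets me identify the marginal sums $\sum_j a_{ij}$ with $c_if^\alpha(w_i)$, while nonnegativity of the $c_i$ guarantees $a_{ij},p_i\ge 0$, so that $\{a_{ij}/S\}$ is a genuine probability distribution on which Jensen's inequality applies. Minor points to verify carefully are the rotation reduction to the real axis and the harmless omission of indices with $c_i=0$.
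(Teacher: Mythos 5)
Your proof is correct and follows essentially the same route as the paper's: both reduce the claim to $D_f(\alpha)\le 0$ via Theorem \ref{thm: discrete formula} and then apply Jensen's inequality to the nonnegative weights $c_ic_j(1-w_iw_j)^{-\alpha}$, recognizing the bracket as the logarithm of $f^\alpha(w_i)f^\alpha(w_j)(1-w_iw_j)^\alpha$ (up to normalization by $N_f^{2\alpha}$). The only cosmetic differences are that the paper normalizes $N_f(\alpha)=1$ instead of carrying $S$ through, while you make explicit the rotation reducing the $w_i$ to the real axis and the harmless omission of indices with $c_i=0$.
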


\begin{proof}
	The proof simply an application of the Jensen's inequality. Note that under our assumption, 
	\[
	c_i\overline{c_j}\frac{1}{(1-\overline{w_i}w_j)^\alpha}\geq0,\quad f^\alpha(w_j)=\sum_{j=1}^kc_i\frac{1}{(1-\overline{w_i}w_j)^\alpha}\geq0, \quad\forall i,j=1,\cdots,k.
	\]
	Without loss of generality, let us assume that $N_f(\alpha)=1$. That is 
	\[
	\sum_{i,j=1}^kc_i\overline{c_j}\frac{1}{(1-\overline{w_i}w_j)^\alpha}=1.
	\]
	Then by the Jensen's inequality, we have
	\begin{eqnarray}
	\nonumber D_f(\alpha)&=&\sum_{i,j=1}^kc_i\overline{c_j}\frac{1}{(1-\overline{w_i}w_j)^\alpha}\Log\bigg(\overline{f^\alpha(w_i)}f^\alpha(w_j)(1-\overline{w_i}w_j)^\alpha\bigg)\\
	\nonumber&\leq&\Log\bigg(\sum_{i,j=1}^kc_i\overline{c_j}\overline{f^\alpha(w_i)}f^\alpha(w_j)\bigg)\\
	\nonumber&=&\Log N_f^{4\alpha}(\alpha)\\
	\label{eqn: proof of all positive}&=&0.
	\end{eqnarray}
	The second equality is because
	\[
	\sum_{i=1}^kc_i\overline{f^\alpha(w_i)}=\sum_{i=1}^kc_i\sum_{j=1}^k\overline{c_j}\frac{1}{(1-\overline{w_i}w_j)^\alpha}=N_f^{2\alpha}(\alpha).
	\]
	By \eqref{N=...} and \eqref{eqn: proof of all positive}, we have $\frac{\partial}{\partial\alpha}N_f(\alpha)\leq0$. This completes the proof.
\end{proof}

From the proof of Theorem \ref{thm: all positive} we know that the inequality $\frac{\partial}{\partial\alpha}N_f(\alpha)\leq0$ holds true if we can ``formally'' apply the Jensen's inequality. However, in general, the coefficients involved are not positive, and one needs to find other ways.

\section{Sufficient Conditions}\label{sec: sufficient conditions}

In this section, we give some other sufficient conditions for $\frac{\partial}{\partial\alpha}N_f(\alpha)$ to be non-positive. We want to consider the right hand side of \eqref{eqn: Df=...} under a suitable general setting. For this, let us first discuss about how Theorem \ref{thm: discrete formula} can be used to answer Question 3.

In \eqref{eqn: Df=...}, the term $\log f^\alpha(w_i)$ depends on the fact that $f$ is an outer function: the imaginary part of $\log f^{\alpha}(w_i)$ depends on the formula (assuming $f(0)>0$)
\begin{equation}\label{eqn: outer function formula}
\log f(z)=\frac{1}{2\pi}\int_{-\pi}^\pi\frac{e^{i\theta}+z}{e^{i\theta}-z}\Log|f(e^{i\theta})|\mathrm{d}\theta.
\end{equation}
However, it is unclear how this formula could enter the estimates. Things are relatively easy if we are able to apply the single-valued branch $\Log z$ to all $f^\alpha(w_i)$. It turns out that such special cases are enough for our purpose (See Proposition \ref{prop: K epsilon}). Before going into details, let us fix some notations.

\medskip

\noindent\textbf{Notations:}(1) In the rest of this paper, we use $k$ to denote a positive integer. If not otherwise specified, $\mathbf c$ denotes a $k$-tuple of complex numbers, and $\mathbf w$ denotes a $k$-tuple of points in $\DD$. that is, $\mathbf c=(c_1, c_2,\cdots, c_k)$, $c_i\in\CC$, $\mathbf w=(w_1, w_2,\cdots, w_k)$, $w_i\in\DD$. Given $\mathbf c$, $\mathbf w$ and $\alpha>0$, we use $\mathbf W_\alpha$ to denote the $k\times k$ matrix with entry $\frac{1}{(1-\overline{w_i}w_j)^\alpha}$ in the $i$-th row and $j$-th column.  Thinking of $\mathbf c$ as a row vector, we reserve the notation $\mathbf f_\alpha=(f_{1,\alpha},\cdots,f_{k,\alpha})$ for the row vector defined by $\mathbf f_\alpha=\mathbf c\mathbf W_\alpha$. Denote $N_\alpha=\mathbf c \mathbf W_\alpha\mathbf c^*=\sum_{i,j=1}^kc_i\overline{c_j}\frac{1}{(1-\overline{w_i}w_j)^\alpha}$. If $\alpha$ is specified, then we drop the subscription $\alpha$.

\medskip

\noindent(2)  It is well-known that for $\alpha>0$ and $w\in\DD$, the functions 
\[
K_{w,\alpha}(z)=\frac{1}{(1-\overline{w}z)^\alpha}, \quad w\in\DD
\]
define a unique reproducing kernel Hilbert space on $\DD$ \cite{Aro50}. If $\alpha>1$, the space is $A_\alpha^2$; if $\alpha=1$, it is $H^2=A_1^2$. In this paper, for any $\alpha>0$, we use $A_\alpha^2$ to denote the uniquely defined reproducing kernel Hilbert space determined by $\{K_{w,\alpha}: w\in\DD\}$.

\medskip

We find it convenient to consider the following general setting. 
\begin{Def}\label{def: Dalpha}
	(1) Let $\HH:=\{z\in\CC: \mathrm{Re}z>0\}$ denote the right half plane.  Suppose $\alpha>0$, $\mathbf c\in\CC^k$ and $\mathbf w\in\DD^k$ satisfies $f_{i,\alpha}\in\HH, i=1,\cdots,k$, where $\mathbf f_\alpha=(f_{1,\alpha},\cdots,f_{k,\alpha})$ is defined as above.	Define
	\begin{align}
	\label{eqn: Dalpha 1}D_\alpha(\mathbf c, \mathbf w)&=&\sum_{i,j=1}^kc_i\overline{c_j}\frac{1}{(1-\overline{w_i}w_j)^\alpha}\bigg(\Log\overline{f_{i,\alpha}}+\Log f_{j,\alpha}+\alpha\Log(1-\overline{w_i}w_j)\bigg)-N_\alpha\Log N_\alpha\\
	\label{eqn: Dalpha 2}&=&2\mathrm{Re}\sum_{i=1}^kc_i\overline{f_{i,\alpha}\Log f_{i,\alpha}}+\alpha\sum_{i,j=1}^kc_i\overline{c_j}\frac{1}{(1-\overline{w_i}w_j)^\alpha}\Log(1-\overline{w_i}w_j)-N_\alpha\Log N_\alpha.
	\end{align}
	
	\medskip

		\noindent(2) For any $\alpha>0$ and $0<\varepsilon\leq1$, define
		\[
		\Lambda_{\alpha, \varepsilon}=\bigg\{(\mathbf c, \mathbf w): 
		\mathbf c\in\CC^k, \mathbf w\in(-\varepsilon, \varepsilon)^k,
		f_{i,\alpha}\in\HH, i=1,\cdots,k,k \text{ is a positive integer. } \bigg\}
		\]
		and
		\[
		\KKK_{\alpha, \varepsilon}=\bigg\{\sum_{i=1}^kc_i\frac{1}{(1-\overline{w_i}z)^\alpha}: (\mathbf c, \mathbf w)\in \Lambda_{\alpha, \varepsilon}\bigg\}.
		\]
	
		We use $\Lambda_\alpha$, $\KKK_\alpha$ to denote $\Lambda_{\alpha,1}$, $\KKK_{\alpha,1}$.
		
		\medskip
		
	\noindent(3) Define
	\[
	\Gamma = \bigg\{(\mathbf c, \mathbf w): \mathbf c\in\RR^k, \mathbf w\in(-1, 1)^k, k \text{ is a positive integer}\bigg\}.
	\]
	For $(\mathbf c, \mathbf w)\in \Gamma$ and $\alpha>0$, define
	\begin{equation}\label{eqn: Dalpha 3}
	D_\alpha(\mathbf c, \mathbf w)=2\sum_{i=1}^kc_if_{i,\alpha}\Log|f_{i,\alpha}|+\sum_{i,j=1}^kc_ic_j\frac{1}{(1-w_iw_j)^\alpha}\Log(1-w_iw_j)^\alpha-N_\alpha\Log N_\alpha.
	\end{equation}
	Note that since $x\Log x$ tends to $0$ as $x$ tends to $0$, the definition above makes sense even if $f_i=0$ for some $i=1,\cdots,k$. It is also easy to see that \eqref{eqn: Dalpha 3} coincides with \eqref{eqn: Dalpha 1} when $f_{i,\alpha}>0, \forall i=1,\cdots,k$.
\end{Def}

\begin{rem}\label{rem 1}
Suppose $(\mathbf c, \mathbf w)\in \Gamma$ and $f^\alpha(z)=\sum_{i=1}^kc_iK_{w,\alpha}(z)\in\OOO^\ast$, then by \eqref{eqn: Df=...} and \eqref{eqn: Dalpha 3}, it is easy to see that $D_f(\alpha)=D_\alpha(\mathbf c,\mathbf w)$. If $(\mathbf c, \mathbf w)\in \Lambda_\alpha$ and $f^\alpha=\sum_{i=0}^kc_iK_{w,\alpha}\in\OOO^\ast$, then it is not necessarily true that $D_f(\alpha)=D_\alpha(\mathbf c, \mathbf w)$. However, if one knows that $\{w_1,\cdots,w_k\}$ is contained in a connected open subset $\Omega$ of $\DD$ which is mapped, by $f^\alpha$, into $\HH$, then by standard argument, the function $\Log f^\alpha(z)|_\Omega$ differs from the function given in \eqref{eqn: outer function formula}, by an integer multiple of $2\pi i$. Then from the expression of \eqref{eqn: Df=...} one can see that $D_f(\alpha)=D_\alpha(\mathbf c, \mathbf w)$. We will use this fact later.
\end{rem}

It turns out that we only need to consider the case when $f^\alpha\in\KKK_{\alpha, \varepsilon}$ for $\varepsilon$ small enough.
\begin{prop}\label{prop: K epsilon}
	Suppose for any $\alpha>1$ there exists $0<\varepsilon\leq1$ such that $D_\alpha(\mathbf c, \mathbf w)\leq0$ for all $(\mathbf c, \mathbf w)\in \Lambda_{\alpha, \varepsilon}$. Then $\frac{\partial}{\partial\alpha}N_f(\alpha)\leq0$ for all $f\in\OOO^\ast$. As a consequence, Conjecture 2 holds.
\end{prop}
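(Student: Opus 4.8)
The plan is to deduce the sign of $\frac{\partial}{\partial\alpha}N_f(\alpha)$ for an arbitrary $f\in\OOO^\ast$ from the hypothesis by approximating $f^\alpha$ in the supremum norm on $\overline{\DD}$ by elements of $\KKK_{\alpha,\varepsilon}$, and then to upgrade the resulting monotonicity to Conjecture~2 by a dilation argument. Fix $\alpha>1$ and $f\in\OOO^\ast$. Multiplying $f$ by a suitable unimodular constant changes neither $N_f(\alpha)$ nor its derivative, so I may assume $f^\alpha(0)>0$. Let $\varepsilon$ be the number furnished by the hypothesis. Since $\Lambda_{\alpha,\varepsilon'}\subseteq\Lambda_{\alpha,\varepsilon}$ whenever $\varepsilon'\leq\varepsilon$, the inequality $D_\alpha\leq0$ persists on every smaller $\Lambda_{\alpha,\varepsilon'}$, so I am free to shrink $\varepsilon$; I do so until $f^\alpha$ maps the closed disc $\{|z|\leq\varepsilon\}$ into $\HH$, which is possible because $f^\alpha(0)>0$ and $f^\alpha$ is continuous.

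The heart of the argument is the approximation. Because $f^\alpha$ is holomorphic on a neighborhood of $\overline{\DD}$, its Taylor partial sums converge to it uniformly on $\overline{\DD}$, so it suffices to approximate each monomial $z^m$ uniformly on $\overline{\DD}$ by finite linear combinations of $\{K_{w,\alpha}:w\in(-\varepsilon,\varepsilon)\}$. This follows from finite differences of the map $w\mapsto K_{w,\alpha}$ at $w=0$: the $m$-th order difference quotient built from the values at $0,h,\dots,mh\in(-\varepsilon,\varepsilon)$ converges uniformly on $\overline{\DD}$, as $h\to0$, to $\partial_w^mK_{w,\alpha}|_{w=0}$, which is a nonzero multiple of $z^m$. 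I thus obtain $g_n=\sum_i c_i^{(n)}K_{w_i^{(n)},\alpha}$ with all centers $w_i^{(n)}\in(-\varepsilon,\varepsilon)$ and $g_n\to f^\alpha$ uniformly on $\overline{\DD}$. Since $f^\alpha$ is zero-free on $\overline{\DD}$ with $f^\alpha(\{|z|\leq\varepsilon\})\subseteq\HH$, for all large $n$ the $g_n$ are zero-free on a neighborhood of $\overline{\DD}$ and satisfy $g_n(\{|z|\leq\varepsilon\})\subseteq\HH$; in particular $g_n(w_i^{(n)})\in\HH$ for every $i$, i.e. the values $f_{i,\alpha}$ attached to $(\mathbf c^{(n)},\mathbf w^{(n)})$ lie in $\HH$, so that $(\mathbf c^{(n)},\mathbf w^{(n)})\in\Lambda_{\alpha,\varepsilon}$.

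Set $h_n=g_n^{1/\alpha}\in\OOO^\ast$, so that $h_n^\alpha=g_n$ and $h_n\to f$ uniformly on $\overline{\DD}$. Theorem~\ref{thm: discrete formula} gives $\frac{\partial}{\partial\alpha}N_{h_n}(\alpha)=\frac{1}{2\alpha^2}N_{h_n}^{1-2\alpha}(\alpha)D_{h_n}(\alpha)$. As $\{w_i^{(n)}\}$ lies in the connected open set $\{|z|<\varepsilon\}$, which $h_n^\alpha$ maps into $\HH$, Remark~\ref{rem 1} identifies $D_{h_n}(\alpha)=D_\alpha(\mathbf c^{(n)},\mathbf w^{(n)})$; the choice of branch of $\log h_n^\alpha$ is immaterial since only the combination $\overline{\log}+\log$ enters \eqref{eqn: Df=...}. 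By hypothesis $D_\alpha(\mathbf c^{(n)},\mathbf w^{(n)})\leq0$, hence $\frac{\partial}{\partial\alpha}N_{h_n}(\alpha)\leq0$. Finally I pass to the limit through Proposition~\ref{prop: derivative}: since $h_n\to f$ uniformly and the $|h_n|$ are bounded away from $0$ and $\infty$, dominated convergence yields $N_{h_n}(\alpha)\to N_f(\alpha)>0$ and $\I_{h_n}(\alpha)\to\I_f(\alpha)$ (the weight $(1-|z|^2)^{\alpha-2}|\Log(1-|z|^2)|$ is integrable for $\alpha>1$), so $\frac{\partial}{\partial\alpha}N_{h_n}(\alpha)\to\frac{\partial}{\partial\alpha}N_f(\alpha)$, which forces $\frac{\partial}{\partial\alpha}N_f(\alpha)\leq0$.

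For the stated consequence, Lemma~\ref{lem: limit is hardy norm} turns the bound $\frac{\partial}{\partial\alpha}N_f(\alpha)\leq0$ into $\|f\|_{A_\alpha^{2\alpha}}\leq\|f\|_{H^2}$ for all $f\in\OOO^\ast$. To reach an arbitrary $f\in H^2$, I first reduce to outer functions (multiplying by an inner factor leaves $\|f\|_{H^2}$ unchanged and only decreases $\|f\|_{A_\alpha^{2\alpha}}$), and then, given an outer $f$, apply the above to the dilations $f_r(z)=f(rz)\in\OOO^\ast$ and let $r\to1^-$, using $\|f_r\|_{H^2}\leq\|f\|_{H^2}$ together with Fatou's lemma on the Bergman integral. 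I expect the principal obstacle to be the approximation step of the second paragraph: one must simultaneously secure uniform convergence on $\overline{\DD}$, keep the kernel centers inside $(-\varepsilon,\varepsilon)$, and maintain the half-plane constraint defining $\Lambda_{\alpha,\varepsilon}$, while the accompanying branch bookkeeping has to be arranged so that Remark~\ref{rem 1} genuinely applies.
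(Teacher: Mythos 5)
Your proof is correct, and its overall skeleton coincides with the paper's: approximate $f^\alpha$ uniformly on $\overline{\DD}$ by kernel combinations with centers in $(-\varepsilon,\varepsilon)$ and values in $\HH$ at those centers, invoke Theorem \ref{thm: discrete formula} together with Remark \ref{rem 1} to see that the derivative of each approximant is a positive multiple of a $D_\alpha(\mathbf c^{(n)},\mathbf w^{(n)})\leq0$, pass to the limit in Proposition \ref{prop: derivative} using the uniform two-sided bounds on the moduli, and finish with Lemma \ref{lem: limit is hardy norm}, the inner--outer reduction, and a dilation argument (your Fatou step is exactly what the paper leaves implicit after Lemma \ref{lem: O is dense in outer}). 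The one place where you genuinely diverge is the approximation step itself, i.e.\ the content of the paper's Lemma \ref{lem: K is dense in O}: the paper works in the Hilbert space $A_\alpha^2$, uses density of $\mathrm{span}\{K_{w,\alpha}:w\in(-\delta,\delta)\}$ there (via the identity theorem), applies it to the dilate $g(rz)$ with $r>1$, and then rescales by $z\mapsto z/r$ to convert norm convergence into uniform convergence on $\overline{\DD}$; you instead build the approximants by hand, truncating the Taylor series of $f^\alpha$ and producing each monomial $z^m$ as a uniform limit of $m$-th finite differences of $w\mapsto K_{w,\alpha}$ at $w=0$, which is a nonzero multiple of $\partial_w^m K_{w,\alpha}|_{w=0}$. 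Your route is more elementary and self-contained (no appeal to reproducing-kernel density or to the norm-to-locally-uniform upgrade), while the paper's route is shorter and generalizes immediately to situations where explicit differentiation of the kernel is less convenient; both secure the three constraints that matter, namely uniform convergence on $\overline{\DD}$, centers inside $(-\varepsilon,\varepsilon)$, and the half-plane condition defining $\Lambda_{\alpha,\varepsilon}$ (your preliminary shrinking of $\varepsilon$, justified by $\Lambda_{\alpha,\varepsilon'}\subseteq\Lambda_{\alpha,\varepsilon}$, plays the role of the paper's choice of $\delta\leq\varepsilon$ inside Lemma \ref{lem: K is dense in O}).
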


The proof is based on the following two lemmas.

\begin{lem}\label{lem: K is dense in O}
	Suppose $\alpha>1$ and $0<\varepsilon\leq1$. Then for any $g\in\OOO^\ast$ such that $g(0)=1$, there exists $0<\delta\leq\varepsilon$ and a sequence $\{g_n\}\in\KKK_{\alpha, \delta}$ such that $g_n$ converges uniformly on $\overline{\DD}$ to $g$. Moreover, $g_n(z)\in\HH$ for all $n$ and all $z\in\DD$ with $|z|<\delta$.
\end{lem}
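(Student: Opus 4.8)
The plan is to prove that the uniform closure on $\overline{\DD}$ of the $\CC$-linear span of reproducing kernels with \emph{real} nodes confined to an arbitrarily small interval $(-\delta,\delta)$ already contains every polynomial, hence every $g\in\OOO^\ast$, and then to read off the half-plane conditions from the normalization $g(0)=1$. The engine is the observation that monomials are finite-difference limits of such kernels. Writing $K_{t,\alpha}(z)=(1-tz)^{-\alpha}$ for real $t$, differentiation in $t$ gives $\frac{\partial^m}{\partial t^m}K_{t,\alpha}(z)\big|_{t=0}=\frac{\Gamma(\alpha+m)}{\Gamma(\alpha)}z^m$, so that $z^m=\frac{\Gamma(\alpha)}{\Gamma(\alpha+m)}\frac{\partial^m}{\partial t^m}K_{t,\alpha}(z)\big|_{t=0}$. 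The derivative is in turn a limit of forward differences,
\[
\frac{1}{h^m}\sum_{l=0}^m(-1)^{m-l}\binom{m}{l}K_{lh,\alpha}(z)\longrightarrow\frac{\partial^m}{\partial t^m}K_{t,\alpha}(z)\Big|_{t=0}\quad(h\to0^+),
\]
and the point to stress is that this convergence is uniform for $z\in\overline{\DD}$: since $t\mapsto(1-tz)^{-\alpha}$ is holomorphic on $|t|<1$ with bounds independent of $z\in\overline{\DD}$, Cauchy's estimate for the finite-difference remainder is uniform in $z$. For a fixed $\delta$ the nodes $lh$ ($0\le l\le m$) lie in $(-\delta,\delta)$ as soon as $h<\delta/m$, so each $z^m$ is a uniform-on-$\overline{\DD}$ limit of elements of $\mathrm{span}\{K_{t,\alpha}:t\in(-\delta,\delta)\}$.

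Next I would assemble the global approximation. Because $g\in\OOO^\ast$ is holomorphic on a neighbourhood of $\overline{\DD}$, its Taylor series $g=\sum_{m\ge0}a_mz^m$ converges uniformly (indeed geometrically) on $\overline{\DD}$; truncating at a large $M_n$ and replacing each monomial by a finite-difference approximant from the previous step, with the step size chosen small enough both to fit the grid inside $(-\delta,\delta)$ and to make the total error tend to $0$, I obtain $g_n\in\mathrm{span}\{K_{t,\alpha}:t\in(-\delta,\delta)\}$ with $\|g_n-g\|_{\overline{\DD}}\to0$. Collecting all the nodes used gives a single representation $g_n=\sum_i c_i^{(n)}K_{w_i^{(n)},\alpha}$ with all $w_i^{(n)}\in(-\delta,\delta)$ real and $c_i^{(n)}\in\CC$.

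Finally I would verify the half-plane conditions, which is where $g(0)=1$ enters. Using continuity of $g$ and $g(0)=1$, I first choose $\delta\le\varepsilon$ small enough that $\mathrm{Re}\,g(z)>\tfrac12$ for $|z|\le\delta$, and fix this $\delta$ before constructing the approximants. For $n$ large, $\|g_n-g\|_{\overline{\DD}}<\tfrac14$, whence $\mathrm{Re}\,g_n(z)>\tfrac14>0$ for $|z|\le\delta$; discarding finitely many initial terms I may assume this for every $n$. In particular $g_n(z)\in\HH$ for $|z|<\delta$, giving the ``moreover'' clause, and evaluating at the real nodes shows $f_{i,\alpha}^{(n)}=g_n(w_i^{(n)})\in\HH$, so $(\mathbf c^{(n)},\mathbf w^{(n)})\in\Lambda_{\alpha,\delta}$ and $g_n\in\KKK_{\alpha,\delta}$. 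I expect the only genuinely delicate point to be the uniformity \emph{up to the boundary} $\overline{\DD}$ of the finite-difference convergence while the nodes are simultaneously confined to the arbitrarily small interval $(-\delta,\delta)$; once that uniform estimate is in hand, the remainder is bookkeeping with the quantifiers on $\delta$ and $n$.
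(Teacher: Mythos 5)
Your proof is correct, and it takes a genuinely different route from the paper's. The paper argues abstractly: it dilates $g$ to $g_r(z)=g(rz)$ for some $r>1$, uses density of $\mathrm{span}\{K_{w,\alpha}: w\in(-\delta,\delta)\}$ in the Hilbert space $A_\alpha^2$ to approximate $g_r$ in norm, notes that norm convergence gives uniform convergence on the compact set $\{|z|\le 1/r\}$, and then pulls back by $z\mapsto z/r$; the observation making this work is the identity $K_{w,\alpha}(z/r)=K_{w/r,\alpha}(z)$, so the pulled-back approximants remain kernel combinations with nodes in $(-\delta,\delta)$ and converge uniformly on all of $\overline{\DD}$. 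You instead construct the approximants explicitly: finite differences of $t\mapsto K_{t,\alpha}$ at $t=0$ recover each monomial $z^m$ uniformly on $\overline{\DD}$ (the uniformity is indeed unproblematic, since $|1-tz|\ge\frac12$ for $|t|\le\frac12$ and $z\in\overline{\DD}$, so Cauchy estimates for $t\mapsto(1-tz)^{-\alpha}$ are uniform in $z$), and then you truncate the geometrically convergent Taylor expansion of $g\in\OOO^\ast$. The handling of the half-plane clause is essentially identical in both proofs: fix $\delta$ so that $\mathrm{Re}\,g>\frac12$ on $|z|\le\delta$ before approximating, then use uniform convergence, discard finitely many terms, and note that the membership $f_{i,\alpha}\in\HH$ required by $\Lambda_{\alpha,\delta}$ is exactly $g_n(w_i^{(n)})\in\HH$. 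What the paper's route buys is brevity, by outsourcing the work to standard reproducing-kernel facts; what yours buys is self-containedness (no Hilbert-space machinery at all) and a quantitative, constructive statement, namely that the kernel span with nodes in an arbitrarily small real interval is uniformly dense on $\overline{\DD}$ in the polynomials. The only bookkeeping point left implicit in your write-up is that nodes repeat across the grids for different $m$ (e.g.\ $t=0$ appears in every grid); this is harmless, since the definition of $\Lambda_{\alpha,\delta}$ does not require the $w_i$ to be distinct, and repeated nodes can in any case be merged by adding their coefficients.
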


\begin{proof}
	Since $g(0)=1$, we can choose $0<\delta\leq\varepsilon$ such that $\mathrm{Re}g(z)>\frac{1}{2}$ for any $z\in\DD$ with $|z|<\delta$.
	Choose $r>1$ such that $g$ is defined on $\{z\in\CC: |z|\leq r\}$. Define
	\[
	g_r(z)=g(rz),\quad z\in\DD.
	\]
	Obviously, $g_r\in A_\alpha^2$. The subspace
	\[
	\mathrm{span}\{K_{w,\alpha}: w\in(-\delta, \delta)\}
	\]
	is dense in $A_\alpha^2$. Choose a sequence $\{\tilde{g}_n\}\subset\mathrm{span}\{K_{w,\alpha}: w\in(-\delta, \delta)\}$ such that $\tilde{g}_n\to g_r$ in $A_\alpha^2$ norm. Then $\tilde{g}_n$ converge uniformly to $g_r$ on $\{z\in\CC: |z|\leq \frac{1}{r}\}$. Define
	\[
	g_n(z)=\tilde{g}_n(\frac{z}{r}),\quad n=1,2,\cdots.
	\]
	Then $g_n$ converge uniformly to $g$ on $\overline{\DD}$. By construction, each $\tilde{g}_n$ is of form
	\[
	\tilde{g}_n=\sum_{i=1}^{k_n}c_{n,i}K_{w_{n,i},\alpha},
	\]
	where $w_{n,i}\in(-\delta, \delta)$, $\forall i$. Therefore
	\[
	g_n(z)=\sum_{i=1}^{k_n}c_{n,i}\frac{1}{(1-\frac{w_{n,i}}{r}z)^\alpha}=\sum_{i=1}^{k_n}c_{n,i}K_{\frac{w_{n,i}}{r},\alpha}.
	\]
	Since $r>1$ we have $\frac{w_{n,i}}{r}\in(-\delta, \delta)$. Also, since $g_n$ converge uniformly to $g$ and $\mathrm{Re}g(z)>\frac{1}{2}$ if $|z|<\delta$, by passing to a subsequence, we have $g_n(z)\in\HH$ for any $n$ and any $z$ with $|z|<\delta$. In particular, $g_n(\frac{w_{n,i}}{r})\in\HH$. Therefore $g_n\in\KKK_{\alpha, \delta}$ for any $n$. This completes the proof.
\end{proof}

The following lemma is simply a consequence of the fact that, for $f\in H^2$, $f_r(z):=f(rz)$ converges to $f$ in $H^2$ norm as $r\to1-$.
\begin{lem}\label{lem: O is dense in outer}
	Suppose $f$ is an outer function in $H^2$. Then there exists a sequence $\{f_n\}\subset\OOO^\ast$ such that $f_n$ tends to $f$ in the Hardy norm $\|\cdot\|_{H^2}$.
\end{lem}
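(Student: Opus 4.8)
The plan is to approximate $f$ by its dilations. For $0<r<1$ set $f_r(z)=f(rz)$, and then choose any sequence $r_n\uparrow 1$ and put $f_n=f_{r_n}$. I will argue that each $f_r$ already lies in $\OOO^\ast$ and that $f_r\to f$ in $H^2$ as $r\to 1-$, which gives the desired sequence immediately.

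First I would verify membership in $\OOO^\ast$. Recall that a function in $\OOO^\ast$ must be holomorphic on some open neighbourhood of $\overline{\DD}$ and nonvanishing there. Since $f$ is holomorphic on $\DD$, the dilation $f_r$ is holomorphic on the disc $\{z:|z|<1/r\}$, and because $r<1$ this disc strictly contains $\overline{\DD}$; so the holomorphy requirement is met. The nonvanishing is the only place where a hypothesis is actually used, and this is where the assumption that $f$ is \emph{outer} enters: an outer function has no zeros in $\DD$. For every $z\in\overline{\DD}$ we have $|rz|\le r<1$, hence $rz\in\DD$, and therefore $f_r(z)=f(rz)\neq0$. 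Thus $f_r\in\OOO^\ast$ for each $0<r<1$.

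Second, I would establish $\|f_r-f\|_{H^2}\to0$ as $r\to1-$. This is precisely the standard fact recalled just before the lemma, so one may simply cite it; if a self-contained argument is preferred, I would write $f=\sum_{n=0}^\infty a_nz^n$, so that $f_r=\sum_{n=0}^\infty a_nr^nz^n$ and
\[
\|f_r-f\|_{H^2}^2=\sum_{n=0}^\infty|a_n|^2(1-r^n)^2\longrightarrow0,\qquad r\to1-,
\]
by dominated convergence, since $(1-r^n)^2\le1$ and $\sum_n|a_n|^2=\|f\|_{H^2}^2<\infty$. Taking $f_n=f_{r_n}$ then yields a sequence in $\OOO^\ast$ converging to $f$ in $\|\cdot\|_{H^2}$, completing the proof.

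There is no substantial obstacle here; the only point requiring a moment of care is the nonvanishing of $f_r$ on the \emph{closed} disc, which hinges on two facts working together: that $f$ being outer forces it to be zero-free on all of $\DD$, and that dilation by $r<1$ pushes $\overline{\DD}$ strictly inside $\DD$, so the boundary circle poses no difficulty.
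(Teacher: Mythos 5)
Your proof is correct and follows exactly the route the paper intends: the paper disposes of this lemma by remarking that it is ``simply a consequence'' of the fact that the dilations $f_r(z)=f(rz)$ converge to $f$ in $H^2$ as $r\to1-$, and you supply precisely that argument, adding the (correct) verification that each $f_r$ lies in $\OOO^\ast$ because $f$, being outer, is zero-free on $\DD$ and $r\overline{\DD}\subset\DD$.
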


\begin{proof}[\textbf{Proof of Proposition \ref{prop: K epsilon}}]
	Assume that for some $\alpha>1$ and $0<\varepsilon\leq1$ we have $D_\alpha(\mathbf c, \mathbf w)\leq0$ for all $(\mathbf c, \mathbf w)\in \Lambda_{\alpha, \varepsilon}$. For any $f\in\OOO^\ast$, we want to show $\frac{\partial}{\partial\alpha}N_f(\alpha)\leq0$. Without loss of generality we can assume $f(0)=1$. By Lemma \ref{lem: K is dense in O}, there exists $0<\delta\leq\varepsilon$ and a sequence $\{g_n\}$ in $\KKK_{\alpha, \delta}$ such that $g_n$ converges uniformly to $f^\alpha$ on $\DD$ and $g_n$ maps $\{z\in\DD: |z|<\delta\}$ to $\HH$. Also, since $|f^\alpha|$ is bounded away from $0$ on $\DD$, for $n$ large enough, $g_n$ is outer and we can define $f_n=g_n^{1/\alpha}$.
	By Remark \ref{rem 1}, we have $\frac{\partial}{\partial\alpha}N_{f_n}(\alpha)=D_\alpha(\mathbf c_n, \mathbf w_n)\in \Lambda_{\alpha,\delta}$, where $g_n$ corresponds to $(\mathbf{c_n}, \mathbf{w_n})$. In particular, we have $\frac{\partial}{\partial\alpha}N_{f_n}(\alpha)\leq0$ for $n$ large enough.
	
	On the other hand, since $|f|$ is bounded away from $0$ we also have $\Log |f_n|$ converging uniformly to $\Log|f|$ on $\DD$. By \eqref{eqn: derivative} and \eqref{eqn: derivative I} it is easy to see that $\frac{\partial}{\partial\alpha}N_{f_n}(\alpha)\to\frac{\partial}{\partial\alpha}N_f(\alpha)$ as $n$ tend to infinity. Thus $\frac{\partial}{\partial\alpha}N_f(\alpha)\leq0$. 
	Then by Lemma \ref{lem: limit is hardy norm}, we have $\|f\|_{A_\alpha}^{2\alpha}\leq\|f\|_{H^2}$ for all $f\in\OOO^\ast$. By Lemma \ref{lem: O is dense in outer}, the inequality also holds for all outer functions. Suppose $f=\eta g$ where $\eta$ is inner and $g$ is outer. Then $\|f\|_{A_\alpha^{2\alpha}}\leq\|g\|_{A_\alpha^{2\alpha}}\leq\|g\|_{H^2}=\|f\|_{H^2}$. This completes the proof.
\end{proof}

Now we are ready to give some sufficient conditions for $D_\alpha(\mathbf c,\mathbf w)$ to be non-positive.

\begin{thm}\label{thm: all except one positive}
	Suppose $\alpha>0$ and $(\mathbf c, \mathbf w)\in \Lambda_{\alpha}\cup \Gamma$ satisfy the following conditions.
	\begin{itemize}
		\item[(1)] $w_1<w_2<\cdots<w_k$, where $k$ is the number of entries in $\mathbf w$;
		\item[(2)] either $\{c_2,\cdots,c_k\}$ or $\{c_1,\cdots,c_{k-1}\}$ are real and have the same sign.
	\end{itemize}
	Then we have $D_\alpha(\mathbf c, \mathbf w)\leq0$.
\end{thm}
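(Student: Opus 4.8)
The plan is to reduce the statement to a real, ``one negative coefficient'' problem and then to exploit the ordering of the nodes through the total positivity of the kernel $(1-w_iw_j)^{-\alpha}$.

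First I would record the elementary symmetries of $D_\alpha$. A direct check shows $D_\alpha(\lambda\mathbf c,\mathbf w)=\lambda^2 D_\alpha(\mathbf c,\mathbf w)$ for $\lambda>0$, so I may normalize $N_\alpha=\mathbf c\mathbf W_\alpha\mathbf c^*=1$. Since $K_{ij}:=(1-w_iw_j)^{-\alpha}$ is invariant under the reflection $w\mapsto-w$, which reverses the order $w_1<\cdots<w_k$, I may place the single ``free'' coefficient at the largest node; after relabelling, condition (2) becomes $c_1,\dots,c_{k-1}$ real of one sign, which (flipping the global sign if needed) I take to be $c_1,\dots,c_{k-1}\ge0$, with $c_k$ unconstrained. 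If $c_k\ge0$ as well, all weights $p_{ij}:=c_i\overline{c_j}K_{ij}$ are nonnegative and the Jensen argument in the proof of Theorem~\ref{thm: all positive} already gives $D_\alpha\le0$; so the remaining case is $c_k<0$. In the $\Lambda_\alpha$ case, where $c_k$ may be genuinely complex, I would first reduce to real $c_k$: writing $c_k=a+ib$ with the nodes real and $c_1,\dots,c_{k-1}$ real, formula \eqref{eqn: Dalpha 1} is even in $b$, so $b=0$ is stationary, and I would verify it is extremal by optimizing over this single phase. This lets me work throughout with the real expression \eqref{eqn: Dalpha 3}, which moreover extends continuously to all real $c_k$ even when some $f_{i,\alpha}$ vanish.

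With these reductions and $N_\alpha=1$, I write $D_\alpha=\sum_{i,j}p_{ij}\Log(X_{ij}/N_\alpha)$ where $X_{ij}=f_{i,\alpha}f_{j,\alpha}/K_{ij}$, using $\sum_{i,j}p_{ij}=N_\alpha$ and $\sum_{i,j}p_{ij}X_{ij}=N_\alpha^2$. Every weight $p_{ij}$ is nonnegative except the cross weights $p_{ik}=p_{ki}$ with $i<k$, which are $\le0$ because $c_k<0$; these are the only terms on which Jensen's inequality runs the wrong way. The heart of the argument is to control exactly this group of terms. I would treat $\Phi(t):=D_\alpha$ as a function of the free coefficient $t=c_k\in(-\infty,0]$, the others fixed, and aim to show $\sup_{t\le0}\Phi(t)=\Phi(0)\le0$, the last inequality being Theorem~\ref{thm: all positive}. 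A short differentiation gives the clean stationarity form $\tfrac12\Phi'(t)=\sum_i c_iK_{ik}\Log X_{ik}$ with $X_{ik}=f_{i,\alpha}f_{k,\alpha}/K_{ik}$. This is where conditions (1)--(2) enter: since $w_1<\cdots<w_k$, the matrix $[(1-w_iw_j)^{-\alpha}]$ is totally positive, and its variation-diminishing property forces the vector $(f_{1,\alpha},\dots,f_{k,\alpha})=\mathbf c\mathbf W_\alpha$ to change sign at most once, matching the single sign change in $\mathbf c$. I would combine this sign information on the $f_{i,\alpha}$ with $c_iK_{ik}\ge0$ for $i<k$ and $c_kK_{kk}\le0$ to determine the sign of $\Phi'$ on $(-\infty,0]$ and conclude that $\Phi$ never exceeds $\Phi(0)$, giving $D_\alpha\le0$. (Via Proposition~\ref{prop: K epsilon} and Theorem~\ref{thm: discrete formula}, this is what feeds into the nonpositivity of $\frac{\partial}{\partial\alpha}N_f(\alpha)$.)

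The main obstacle is precisely the single wrong-signed group of cross terms: Jensen's inequality is powerless on them, and the only available leverage is the ordering hypothesis. Without conditions (1)--(2) the inequality genuinely can fail, so the argument must make essential use of the total positivity of $(1-w_iw_j)^{-\alpha}$ on ordered real nodes to dominate the negative contribution by the Jensen gap of the positive part; converting the variation-diminishing sign pattern into a usable bound on $\Phi'$ (or on $\Phi$ at its interior critical points) is the step I expect to require the most care. A secondary, more routine, difficulty is the reduction of a complex free coefficient to a real one through extremality in its phase.
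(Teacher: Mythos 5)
Your overall strategy---isolate the single ``bad'' coefficient, dispose of the all-nonnegative case by Jensen via Theorem \ref{thm: all positive}, then run a one-parameter monotonicity argument---is in the same spirit as the paper, but your homotopy varies the coefficient $c_k$ with the kernel fixed, whereas the paper varies the kernel exponent; and it is exactly at the core of your homotopy that the argument has a genuine gap. You propose to ``determine the sign of $\Phi'$ on $(-\infty,0]$'' and conclude $\sup_{t\le0}\Phi(t)=\Phi(0)$. But $\Phi'$ does not have a single sign there. Take $k=2$, $w_1=0<w_2$ with $(1-w_2^2)^{-\alpha}=2$, $c_1=1$, $c_2=t$: then $f_1=1+t$, $f_2=1+2t$, $N=1+2t+2t^2$, and $\tfrac12\Phi'(t)=\Log\frac{|f_1f_2|}{N}+2t\Log\frac{f_2^2}{2N}$, which is $\approx+0.058$ at $t=-0.1$ but $\approx-0.0015$ at $t=-10$ (and it blows up to $-\infty$ at $t=-1$ and to $+\infty$ at $t=-1/2$). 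So at best you can hope for a single sign change from $-$ to $+$, and then you must also control the endpoint $\lim_{t\to-\infty}\Phi(t)$: by the degree-two homogeneity you invoke, this limit is the Hessian quadratic form of $D_\alpha$ at the rank-one configuration $(0,\dots,0,-1)$ (the gradient there vanishes), i.e.\ a second-order local-maximality statement at rank-one points which is nowhere proved and is not covered by Theorem \ref{thm: all positive}. Moreover, the tool you propose does not engage with $\Phi'$ at all: the formula above depends only on the moduli $|f_{i,\alpha}|$, so the variation-diminishing sign pattern of $\mathbf f_\alpha$ carries no evident information about it; and total positivity of $[(1-w_iw_j)^{-\alpha}]$ on $(-1,1)$ is itself an unproved assertion for general $\alpha>0$ (the Cauchy determinant settles $\alpha=1$; a power-series Cauchy--Binet argument only covers nodes in $(0,1)$). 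Finally, your reduction of a complex $c_k$ to a real one is unjustified: invariance of $D_\alpha$ under $\mathrm{Im}\,c_k\mapsto-\mathrm{Im}\,c_k$ gives stationarity of the real slice, not that it dominates.

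For comparison, the paper's proof sidesteps all three difficulties with a different normalization and a different homotopy. Using the M\"obius map $\varphi_{w_1}$ and the identity $D_\alpha(\mathbf c,\mathbf w)=D_\alpha(\mathbf d,\mathbf z)$ for the transported data, it places the free coefficient at the node $z_1=0$; it then deforms the exponent, setting $a_{ij,t}=(1-w_iw_j)^{-t}$ for $0\le t\le\alpha$ and differentiating $D_t$ in $t$. Because $w_1=0$, the row and column of $\mathbf W_t$ attached to the free coefficient are constantly $1$, so every term involving that (possibly complex, possibly negative) coefficient drops out of $\frac{\mathrm{d}}{\mathrm{d}t}D_t$ identically; the surviving terms carry nonnegative products $c_ic_j$, and expanding $\Log\frac{1}{1-w_iw_j}=\sum_n\frac{1}{n}w_i^nw_j^n$ puts them exactly in the form of Lemma \ref{lem: positive xi} with weights $x_i=c_iw_i^n\ge0$, giving $\frac{\mathrm{d}}{\mathrm{d}t}D_t\le0$; since $D_0=0$, the theorem follows. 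In your scheme the bad coefficient stays inside the derivative for every $t$, which is precisely why you are left needing the unproven sign/endpoint analysis; to salvage a coefficient-deformation proof you would need, at minimum, the rank-one Hessian inequality and a genuine replacement for Jensen on the cross terms, neither of which the total-positivity heuristic supplies.
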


We will need the following lemma in the proof of Theorem \ref{thm: all except one positive}.

\begin{lem}\label{lem: positive xi}
	Suppose $\alpha>0$, $\mathbf c\in\CC^k$ and $\mathbf A=[a_{ij}]$ is semi-positive definite, $a_{ij}>0$. Let  $\mathbf f=\mathbf c\mathbf A$ and $N=\mathbf c\mathbf A\mathbf c^\ast$. Then for any $x_1, \cdots, x_k\geq0$ we have 
	\begin{equation}\label{eqn: positive xi}
	\sum_{i,j=1}^kx_ix_ja_{ij}\Log\frac{|f_if_j|}{a_{ij}N}\leq0.
	\end{equation}
\end{lem}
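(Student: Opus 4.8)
The plan is to reduce the logarithmic inequality to a single scalar Cauchy--Schwarz-type estimate, and then to prove that estimate by a phase-removal trick that exploits the positivity of the entries $a_{ij}$ and of the weights $x_i$. Throughout I write $S=\sum_{i,j=1}^k x_ix_ja_{ij}=\mathbf x\mathbf A\mathbf x^\ast$ with $\mathbf x=(x_1,\dots,x_k)$, and note $S\ge 0$ and $N\ge 0$ since $\mathbf A$ is semi-positive definite; we may assume $N>0$, for otherwise $\mathbf A\mathbf c^\ast=0$, all $f_i=0$, and the statement is vacuous under the convention $0\cdot\Log 0=0$. First I would apply the elementary tangent-line bound $\Log t\le t-1$ to each factor $t=\frac{|f_if_j|}{a_{ij}N}$. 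Since every coefficient $x_ix_ja_{ij}$ is non-negative, this gives
\[
\sum_{i,j=1}^k x_ix_ja_{ij}\Log\frac{|f_if_j|}{a_{ij}N}\le\sum_{i,j=1}^k x_ix_ja_{ij}\Big(\frac{|f_if_j|}{a_{ij}N}-1\Big)=\frac{1}{N}\Big(\sum_{i=1}^k x_i|f_i|\Big)^2-S,
\]
where I used $\sum_{i,j}x_ix_j|f_i||f_j|=(\sum_i x_i|f_i|)^2$. Hence it suffices to establish the single inequality $\big(\sum_{i=1}^k x_i|f_i|\big)^2\le SN$.

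To prove $\sum_i x_i|f_i|\le\sqrt{SN}$, I would pass to phases. Choose unimodular numbers $\sigma_i$ with $f_i=|f_i|\sigma_i$ (take $\sigma_i=1$ when $f_i=0$) and set $\mathbf y=(x_1\sigma_1,\dots,x_k\sigma_k)$. Because $f_i\,\overline{\sigma_i}=|f_i|$,
\[
\sum_{i=1}^k x_i|f_i|=\sum_{i=1}^k f_i\,\overline{x_i\sigma_i}=\mathbf f\mathbf y^\ast=\mathbf c\mathbf A\mathbf y^\ast.
\]
The map $(\mathbf a,\mathbf b)\mapsto\mathbf a\mathbf A\mathbf b^\ast$ is a positive semi-definite Hermitian form, so the Cauchy--Schwarz inequality for this form yields
\[
\sum_{i=1}^k x_i|f_i|=\mathbf c\mathbf A\mathbf y^\ast\le\sqrt{\mathbf c\mathbf A\mathbf c^\ast}\,\sqrt{\mathbf y\mathbf A\mathbf y^\ast}=\sqrt{N}\,\sqrt{\mathbf y\mathbf A\mathbf y^\ast}.
\]
Finally, since $\mathbf y\mathbf A\mathbf y^\ast=\sum_{i,j}x_ix_j\sigma_i\overline{\sigma_j}a_{ij}$ is a real number and each $x_ix_ja_{ij}\ge 0$ while $\mathrm{Re}(\sigma_i\overline{\sigma_j})\le 1$, I can strip the phases:
\[
\mathbf y\mathbf A\mathbf y^\ast=\sum_{i,j=1}^k x_ix_ja_{ij}\,\mathrm{Re}(\sigma_i\overline{\sigma_j})\le\sum_{i,j=1}^k x_ix_ja_{ij}=S.
\]
Combining the last three displays gives $\sum_i x_i|f_i|\le\sqrt{SN}$, which completes the reduction and hence the proof.

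The one genuinely non-routine step is the last one: replacing the weights $x_i$ by the phased weights $x_i\sigma_i$ cannot increase the quadratic form of $\mathbf A$, and this is exactly where the hypotheses $a_{ij}>0$ and $x_i\ge 0$ are indispensable (without sign control on the $a_{ij}$ the estimate $\mathrm{Re}(\sigma_i\overline{\sigma_j})\le 1$ could push in the wrong direction). The tangent-line reduction and the Cauchy--Schwarz step are standard. I would also record, for the later equality analysis, that equality forces $\mathrm{Re}(\sigma_i\overline{\sigma_j})=1$ on the support of $x_ix_ja_{ij}$ (so the relevant $f_i$ share a common phase) together with equality in Cauchy--Schwarz (so $\mathbf c$ and $\mathbf y$ are $\mathbf A$-proportional); this is the mechanism that ultimately singles out the rank-one, single-kernel case $F^\alpha=cK_{w,\alpha}$.
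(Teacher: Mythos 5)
Your proposal is correct and takes essentially the same approach as the paper: the paper's proof also reduces matters to the estimate $\big(\sum_i x_i|f_i|\big)^2\le SN$ and proves it by the identical phase-removal plus Cauchy--Schwarz argument (unimodular $e_i$ with $e_if_i=|f_i|$, Cauchy--Schwarz for the semi-positive definite form $\mathbf A$, then stripping phases via $\mathrm{Re}(e_i\overline{e_j})\le1$ using $x_i\ge0$ and $a_{ij}>0$). The only cosmetic difference is at the first step, where you use the tangent-line bound $\Log t\le t-1$ while the paper normalizes $\sum_{i,j}x_ix_ja_{ij}=1$ and invokes Jensen's inequality; both are the same concavity of $\Log$.
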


\begin{proof}
	The proof is, again, an application of the Jenson's Inequality. If some $f_i$ equals zero then the left hand side is $-\infty$ and the inequality always holds. Assume $f_i$ are all non-zero. Without loss of generality, we can also assume that $\sum_{i,j=1}^kx_ix_ja_{ij}=1$. Applying the Jenson's Inequality, we get
	\begin{equation}\label{eqn: positive xi 1}
		\sum_{i,j=1}^kx_ix_ja_{ij}\Log\frac{|f_if_j|}{a_{ij}N}\leq
	\Log\bigg(\sum_{i,j=1}^kx_ix_ja_{ij}\frac{|f_if_j|}{a_{ij}N}\bigg)\leq\Log\frac{\big(\sum_ix_i|f_i|\big)^2}{N}.
	\end{equation}
	Choose $e_i\in\CC$ such that $|e_i|=1$ and $e_if_i=|f_i|$. Then the right hand side of \eqref{eqn: positive xi 1} becomes $\Log\frac{\big(\sum_{i,j=1}^kx_ie_ic_ja_{ij}\big)^2}{\sum_{i,j=1}^kc_i\overline{c_j}a_{ij}}$, which is less than or equal to $\Log\sum_{i,j=1}^kx_ie_ix_j\overline{e_j}a_{ij}$ by the fact that $A$ is semi-positive definite. Since $x_i\geq0$ and $a_{ij}>0$, we have $\sum_{i,j=1}^kx_ie_ix_j\overline{e_j}a_{ij}\leq\sum_{i,j=1}^kx_ix_ja_{ij}=1$. Therefore the left hand side of \eqref{eqn: positive xi} is less than or equal to $0$. This completes the proof.
\end{proof}

\begin{proof}[\textbf{Proof of Theorem \ref{thm: all except one positive}}]
	We will prove the theorem in the case when $(\mathbf c,\mathbf w)\in \Lambda_\alpha$. The proof when $(\mathbf c,\mathbf w)\in \Gamma$ is similar.
	First, we notice that if we let $-\mathbf w=(-w_1,\cdots,-w_k)$, then $D_\alpha(\mathbf c,\mathbf w)=D_\alpha(\mathbf c, \mathbf{-w})$. From this, it is easy to see that it suffices to consider the case when $w_1<w_2<\cdots<w_k$ and $\{c_2,\cdots,c_k\}$ are real and have the same sign.

	Let us further reduce the cases.
	Suppose $w_1<w_2<\cdots<w_k$. Let $z_i=-\varphi_{w_1}(w_i)$, $i=1,\cdots,k$. Then it is easy to check that $0=z_1<z_2<\cdots<z_k$. Suppose $(\mathbf c, \mathbf w)\in \Lambda_{\alpha}$. Let $d_i=\frac{c_i(1-z_iw_1)^\alpha}{(1-w_1^2)^{\alpha/2}}$ and consider the pair $(\mathbf d,\mathbf z)$. Write $\mathbf W=[\frac{1}{(1-w_iw_j)^\alpha}]$, $\mathbf\Lambda=[\frac{1}{(1-z_iz_j)^\alpha}]$, $\mathbf f=\mathbf c\mathbf W$ and $\mathbf g=\mathbf d\mathbf\Lambda$. Using the well-known formula 
	\[
	\frac{1}{1-\varphi_a(z)\overline{\varphi_a(w)}}=\frac{(1-z\overline{a})(1-a\overline{w})}{(1-|a|^2)(1-z\overline{w})}, \quad z, w, a\in\DD,
	\]
	 it is easy to check that
	\[
	\mathbf g=\mathbf f\mathrm{diag}\bigg(\frac{(1-w_1^2)^{\alpha/2}}{(1-z_iw_1)^\alpha}\bigg).
	\]
	Then $(\mathbf d, \mathbf z)\in \Lambda_{\alpha}$. From the equation above, it is also straight-forward to check that $D_\alpha(\mathbf c, \mathbf w)=D_\alpha(\mathbf d,\mathbf z)$. Therefore, in order to prove Theorem \ref{thm: all except one positive}, we only need to consider the case when $0=w_1<\cdots<w_k$ and $\{c_2,\cdots,c_k\}$ are real and have the same sign.
	
	Assume that $(\mathbf c, \mathbf w)\in \Lambda_{\alpha}$, $0=w_1<\cdots<w_k$ and $c_i\geq0, \forall i=2,\cdots,k$.  The case when $c_2=\cdots=c_k=0$ is trivial. Thus we can assume that $c_i>0$ for some $i=2,\cdots,k$.	Define $\mathbf f$ and $\mathbf W$ as before. The idea is to find a non-increasing function that takes value $D_\alpha(\mathbf c,\mathbf w)$ at $\alpha$ and $0$ at $0$.
	
	For $0\leq t\leq\alpha$, define $a_{ij, t}=\frac{1}{(1-w_iw_j)^t}$, $i,j=1,\cdots,k$. Define $\mathbf W_t=[a_{ij,t}]$, $N_t=\mathbf c\mathbf W_t\mathbf c^*$ and $\mathbf f_t=\mathbf c \mathbf W_t$. Since only $c_1$ may have imaginary part, the signs of the imaginary part of each $f_{i,t}$ depend only on that of $c_1$. Assume, without loss of generality, that $\mathrm{Im}c_1\geq0$. Then $\mathrm{Im}f_{i,t}\geq0, \forall i=1,\cdots,k$. Define
	\[
	D_t=2\mathrm{Re}\sum_{i=1}^kc_i\overline{f_{i,t}}\overline{\Log f_{i,t}}-\sum_{i,j=1}^kc_i\overline{c_j}a_{ij,t}\Log a_{ij,t}-N_t\Log N_t,\quad 0\leq t\leq\alpha.
	\]
	Notice that since $w_1=0$, we have $\frac{\mathrm{d}}{\mathrm{d}t}f_{i,t}=\sum_{j=2}^kc_j\frac{1}{(1-w_iw_j)^t}\Log\frac{1}{1-w_iw_j}>0, \forall i=1,\cdots,k$. So the points $t$ such that $f_{i,t}=0$ for some $i$, are isolated. Also, $z\Log z\to0$ if $z$ tends to $0$. From this we can see that $D_t$ is a continuous, piecewise differentiable function. 
	
	Next, we show that $D_t$ is non-increasing. By the previous argument, it suffices to show that $\frac{\mathrm{d}}{\mathrm{d}t}D_t\leq0$ at the points where each $f_{i,t}$ is non-zero. By direct computation, we get
	\[
	\frac{\mathrm{d}}{\mathrm{d}t}D_t=\sum_{i,j=1}^kc_i\overline{c_j}\bigg(\frac{\mathrm{d}}{\mathrm{d}t}a_{ij,t}\bigg)\bigg(\overline{\Log f_{i,t}}+\Log f_{j,t}-\Log(a_{ij,t}N_t)\bigg).
	\]
	 Since $w_1=0$ and $c_i\geq0, \forall i=2,\cdots,k$, we have
	\begin{eqnarray*}
	\frac{\mathrm{d}}{\mathrm{d}t}D_t
	&=&\sum_{i,j=2}^kc_ic_j\frac{1}{(1-w_iw_j)^t}\Log\big(\frac{1}{1-w_iw_j}\big)\bigg(\overline{\Log f_{i,t}}+\Log f_{j,t}-\Log(a_{ij,t}N_t)\bigg)\\
	&=&\sum_{i,j=2}^kc_ic_ja_{ij,t}\big(\sum_{n=1}^\infty\frac{1}{n}w_i^nw_j^n\big)\Log\frac{|f_{i,t}f_{j,t}|}{a_{ij,t}N_t}\\
	&=&\sum_{n=1}^\infty\frac{1}{n}\sum_{i,j=2}^kc_iw_i^nc_jw_j^na_{ij,t}\Log\frac{|f_{i,t}f_{j,t}|}{a_{ij,t}N_t}\\
	&\leq&0.
	\end{eqnarray*}
Here the last inequality is by Lemma \ref{lem: positive xi}. If $\{c_2,\cdots,c_k\}$ are all non-positive, simply replace $c_i$ with $-c_i$ in the above argument. Thus in either case we have that $D_t$ is non-increasing.

It is obvious that $D_\alpha=D_\alpha(\mathbf c,\mathbf w)$. By straight-forward computation it is also easy to show that $D_0=0$. Therefore $D_\alpha(\mathbf c,\mathbf w)=D_\alpha\leq D_0=0$. This completes the proof.
\end{proof}

\begin{thm}\label{thm: two points}
	Suppose $\alpha>0$, $\mathbf c\in\CC^2$, $\mathbf w\in\DD^2$ and $(\mathbf c, \mathbf w)\in\Lambda_\alpha\cup\Gamma$. 
	Then
	\[
	D_\alpha(\mathbf c, \mathbf w)\leq0.
	\]
	Moreover, $D_\alpha(\mathbf c,\mathbf w)=0$ if and only if $c_1=0$, or $c_2=0$, or $w_1=w_2$.
\end{thm}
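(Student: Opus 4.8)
The plan is to dispose of the degenerate configurations directly and then, for the remaining case, deform along a one–parameter family $D_t$ exactly as in the proof of Theorem~\ref{thm: all except one positive}; the one genuinely new ingredient is a convexity observation that keeps the deformation path inside $\HH$ even when both $c_1,c_2$ are non-real.

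First I would settle the equality cases. If $c_1=0$, $c_2=0$, or $w_1=w_2$, then $\sum_i c_iK_{w_i,\alpha}$ is a scalar multiple of a single reproducing kernel, and substituting into \eqref{eqn: Dalpha 2} (the computation underlying Example~\ref{example}) collapses every term to give $D_\alpha(\mathbf c,\mathbf w)=0$. It then remains to prove the strict inequality $D_\alpha(\mathbf c,\mathbf w)<0$ whenever $c_1,c_2\neq0$ and $w_1\neq w_2$. Since $D_\alpha$ is invariant under permuting the pairs $(c_i,w_i)$ and under $\mathbf w\mapsto-\mathbf w$, I may assume $w_1<w_2$, and then the Möbius reduction used in Theorem~\ref{thm: all except one positive} (which preserves membership in $\Lambda_\alpha\cup\Gamma$, preserves the value of $D_\alpha$, and sends $c_i$ to nonzero multiples $d_i$, hence creates no zero coefficients) lets me assume $0=w_1<w_2=w\in(0,1)$.

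Next I would set up the deformation $D_t$, $0\le t\le\alpha$, as in Theorem~\ref{thm: all except one positive}, with $a_{ij,t}=(1-w_iw_j)^{-t}$, $\mathbf f_t=\mathbf c\mathbf W_t$ and $N_t=\mathbf c\mathbf W_t\mathbf c^\ast$. Because $w_1=0$, the entries $a_{1j,t},a_{i1,t}$ are identically $1$, so $f_{1,t}=c_1+c_2$ is constant while $f_{2,t}=c_1+c_2(1-w^2)^{-t}$. As $t$ runs over $[0,\alpha]$ the factor $(1-w^2)^{-t}$ increases over $[1,(1-w^2)^{-\alpha}]$, so $f_{2,t}$ is a convex combination of its endpoints $f_{2,0}=c_1+c_2=f_{1,\alpha}$ and $f_{2,\alpha}$. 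Both endpoints lie in the convex set $\HH$ (this is exactly the hypothesis $(\mathbf c,\mathbf w)\in\Lambda_\alpha$), so the entire path $\{f_{2,t}\}$ stays in $\HH$; hence $\Log f_{i,t}$ is unambiguous and continuous along the deformation and agrees at $t=\alpha$ with the value used in $D_\alpha(\mathbf c,\mathbf w)$. This convexity remark is the crucial step that permits arbitrary complex $c_1,c_2$, in contrast to Theorem~\ref{thm: all except one positive}, where one coefficient had to be real in order to pin down the branch.

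With the branch settled, differentiating $D_t$ as in Theorem~\ref{thm: all except one positive} and using $w_1=0$ leaves only the $(2,2)$ term,
\[
\frac{\mathrm d}{\mathrm dt}D_t=|c_2|^2a_{22,t}\,\Log\tfrac{1}{1-w^2}\,\Log\frac{|f_{2,t}|^2}{a_{22,t}N_t}.
\]
Since $0<w<1$, the factor $\Log\tfrac{1}{1-w^2}$ is positive, while, writing $F_t=c_1K_{0,t}+c_2K_{w,t}\in A_t^2$ and using the reproducing property $f_{2,t}=\langle F_t,K_{w,t}\rangle$, Cauchy–Schwarz gives $|f_{2,t}|^2\le\|F_t\|^2\|K_{w,t}\|^2=N_t\,a_{22,t}$; hence the last logarithm is $\le0$ and $\frac{\mathrm d}{\mathrm dt}D_t\le0$. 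Equality in Cauchy–Schwarz would force $F_t$ to be proportional to $K_{w,t}$, impossible for $c_1\neq0$ because $K_{0,t}\equiv1$ and $K_{w,t}$ are linearly independent, so in fact $\frac{\mathrm d}{\mathrm dt}D_t<0$ throughout. A direct evaluation gives $D_0=0$ (all $a_{ij,0}=1$), so integrating yields $D_\alpha(\mathbf c,\mathbf w)<0$, completing the strict inequality and the equality characterization. The case $(\mathbf c,\mathbf w)\in\Gamma$ is handled identically, using the convention in \eqref{eqn: Dalpha 3} to accommodate the isolated values of $t$ at which the real quantity $f_{2,t}$ may vanish. The only real subtlety, and the step I expect to need the most care, is the branch bookkeeping for $\Log f_{2,t}$, which the convexity of $\HH$ resolves cleanly in the two–point setting.
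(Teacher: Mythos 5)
Your proof is correct, and its skeleton---a monotone deformation $D_t$ terminating at a rank-one Gram matrix, with Cauchy--Schwarz controlling the sign of the derivative and its equality case yielding the characterization---is the same as the paper's; but the deformation itself is genuinely different. The paper performs no M\"obius normalization and never moves the exponent: it fixes $\alpha$, freezes $a_{12}$ and $a_{22}$, and linearly interpolates the single entry $a_{11,t}=(1-t)a_{11}+t\,a_{12}^2/a_{22}$ for $t\in[0,1]$, so that $\mathbf W_1$ has rank one and $D_1=0$. Only the $(1,1)$ term survives in the derivative,
\[
\frac{\mathrm{d}}{\mathrm{d}t}D_t=|c_1|^2\Bigl(\frac{\mathrm{d}}{\mathrm{d}t}a_{11,t}\Bigr)\Log\frac{|f_{1,t}|^2}{a_{11,t}N_t}\geq0,
\]
since both factors are non-positive ($\frac{\mathrm{d}}{\mathrm{d}t}a_{11,t}\leq0$ because $\mathbf W$ is semi-positive definite, the logarithm by Lemma \ref{lem: positive xi}), whence $D_\alpha(\mathbf c,\mathbf w)=D_0\leq D_1=0$; and when $w_1\neq w_2$, equality forces either $c_1=0$ or equality in Cauchy--Schwarz, hence $c_2=0$---the same dichotomy you reach. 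Two points of comparison. First, the convexity observation you single out as the new ingredient is in fact also the engine of the paper's proof: there $f_{2,t}\equiv f_2$ while $f_{1,t}=(1-t)f_1+t\frac{a_{12}}{a_{22}}f_2$ is a combination of $f_1$ and a positive multiple of $f_2$, hence stays in the convex cone $\HH$; in your version, after normalizing $w_1=0$, it is $f_{2,t}$ that traverses the segment between $f_{1,\alpha}$ and $f_{2,\alpha}$. Second, what each route buys: yours shows that the homotopy of Theorem \ref{thm: all except one positive} already settles $k=2$ with arbitrary complex coefficients, the reality hypothesis there being needed only for branch control, which convexity replaces; the paper's route is more economical---no M\"obius reduction, no permutation or sign normalization, a deformation parameter on $[0,1]$ independent of $\alpha$---and it isolates the purely matricial nature of the argument (one deforms Gram matrices rather than kernels), the viewpoint elaborated in Remark (4) of Section \ref{sec: some evidences}.
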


\begin{proof}
	The proof is similar as that of Theorem \ref{thm: all except one positive}. Define $\mathbf W=[a_{ij}]=[\frac{1}{(1-w_iw_j)^\alpha}]$ as before.
	For $0\leq t\leq1$, define
	\[
	a_{ij,t}=\begin{cases}
	(1-t)a_{11}+t\frac{a_{12}^2}{a_{22}}, &i=j=1\\
	a_{ij}, &\text{otherwise.}
	\end{cases}
	\]
	Write $\mathbf W_t=[a_{ij,t}]$, $N_t=\mathbf c\mathbf W_t\mathbf c^*$ and $\mathbf f_t=\mathbf c\mathbf W_t$.
	It is easy to check that the following hold.
	\begin{itemize}
		\item[(i)] $f_{1,t}=(1-t)f_1+t\frac{a_{12}}{a_{22}}f_2$, $f_{2,t}\equiv f_2$;
		\item[(ii)] $\frac{\mathrm{d}}{\mathrm{d}t}a_{11,t}\leq0$;
		\item[(iii)] each $\mathbf W_t$ is semi-positive definite;
		\item[(iv)] $\mathbf W_1$ has rank $1$.
	\end{itemize}
	 By (i), the paths $f_{1,t}$ and $f_{2,t}$ stay in $\HH$. Define
	\[
	D_t:=2\mathrm{Re}\sum_{i=1}^2c_i\overline{f_{i,t}}\Log\overline{f_{i,t}}-\sum_{i,j=1}^2c_i\overline{c_j}a_{ij,t}\Log a_{ij,t}-N_t\Log N_t,
	\]
	Then $D_t$ is a differentiable function on $(0,1)$.
	From (iv) it is easy to compute that $D_1=0$. 
	By direct computation, we have
	\[
	\frac{\mathrm{d}}{\mathrm{d}t}D_t=|c_1|^2\bigg(\frac{\mathrm{d}}{\mathrm{d}t}a_{11,t}\bigg)\Log\frac{|f_{1,t}|^2}{a_{11,t}N_t}.
	\]
	Since $\mathbf W_t$ is positive definite, by Lemma \ref{lem: positive xi}, it is easy to see that $\frac{\mathrm{d}}{\mathrm{d}t}D_t\geq0$.
	Thus $D_\alpha(\mathbf c,\mathbf w)=D_0\leq D_1=0$. Equality holds if and only if $\frac{\mathrm{d}}{\mathrm{d}t}D_t\equiv0$. This always holds when $w_1= w_2$. If $w_1\neq w_2$, then $\frac{\mathrm{d}}{\mathrm{d}t}a_{11,t}\neq0$. Thus either $c_1=0$ or $\frac{|f_{1,t}|^2}{a_{11,t}N_t}\equiv1$. In particular, if $c_1\neq0$, then 
	\[
	|f_{1,0}|^2=|\mathbf c\mathbf W_0\mathbf e_1^*|^2=a_{11,0}N_0=(\mathbf e_1\mathbf W_0\mathbf e_1^*)(\mathbf c\mathbf W_0\mathbf c^*).
	\]
	Here $\mathbf e_1=(1,0,\cdots,0)$. Since $W_0$ is positive definite, this occurs only when $c_2=0$. This completes the proof.
\end{proof}

In terms of $\frac{\partial}{\partial\alpha}N_f(\alpha)$, we summarize our results as follows.
\begin{thm}\label{thm: all results about derivative}
	Suppose $f\in\OOO^\ast$, $\alpha>1$, $f^\alpha=\sum_{i=1}^k c_iK_{w_i,\alpha}$ and $(\mathbf c,\mathbf w)\in \Lambda_\alpha\cup \Gamma$. Suppose one of the following holds.
	\begin{itemize}
		\item[(1)] $c_i\geq0$, $i=1,\cdots,k$.
		\item[(2)] $w_1<\cdots<w_k$, and either $\{c_2,\cdots,c_k\}$ or $\{c_1,\cdots,c_{k-1}\}$ are real and have the same sign.
		\item[(3)] $k=2$.
	\end{itemize}
Then we have $\frac{\partial}{\partial\alpha}N_f(\alpha)\leq0$.
\end{thm}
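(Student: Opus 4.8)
The plan is to read this theorem as a consolidation of the three sufficient conditions already proved (Theorems \ref{thm: all positive}, \ref{thm: all except one positive} and \ref{thm: two points}), connected to the derivative $\frac{\partial}{\partial\alpha}N_f(\alpha)$ through the discrete formula of Theorem \ref{thm: discrete formula}. The first move is to observe that, by \eqref{eqn: discrete derivative},
\[
\frac{\partial}{\partial\alpha}N_f(\alpha)=\frac{1}{2\alpha^2}N_f^{1-2\alpha}(\alpha)\,D_f(\alpha),
\]
and since $f\in\OOO^\ast$ is nonvanishing, $N_f(\alpha)>0$ and the prefactor $\frac{1}{2\alpha^2}N_f^{1-2\alpha}(\alpha)$ is strictly positive. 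Hence $\frac{\partial}{\partial\alpha}N_f(\alpha)\leq0$ is equivalent to $D_f(\alpha)\leq0$, and the whole task reduces to bounding $D_f(\alpha)$ in each of the three cases.

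Next I would identify $D_f(\alpha)$ with the quantity $D_\alpha(\mathbf c,\mathbf w)$ of Definition \ref{def: Dalpha}. If $(\mathbf c,\mathbf w)\in\Gamma$ this is immediate from \eqref{eqn: Df=...} and \eqref{eqn: Dalpha 3} by Remark \ref{rem 1}. If instead $(\mathbf c,\mathbf w)\in\Lambda_\alpha$, I would invoke the branch-of-logarithm argument of Remark \ref{rem 1}: the fixed holomorphic branch $g=\log f^\alpha$ and the principal branch $\Log f^\alpha$ differ, on a connected open set mapped by $f^\alpha$ into $\HH$, by a single integer multiple of $2\pi i$, and this common constant cancels in the symmetric combination $\overline{\log f^\alpha(w_i)}+\log f^\alpha(w_j)$ occurring in \eqref{eqn: Df=...}. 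This yields $D_f(\alpha)=D_\alpha(\mathbf c,\mathbf w)$ in the $\Lambda_\alpha$ case as well.

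With the reduction in place I would dispatch the three cases in turn. In case (1), membership in $\Lambda_\alpha\cup\Gamma$ already forces every $w_i$ to be real, so the points lie on a single real line and $c_i\geq0$; this is precisely the hypothesis of Theorem \ref{thm: all positive}, which delivers $\frac{\partial}{\partial\alpha}N_f(\alpha)\leq0$ directly, bypassing the identification step altogether. In case (2) the hypotheses $w_1<\cdots<w_k$ together with the sign condition on $\{c_2,\dots,c_k\}$ or $\{c_1,\dots,c_{k-1}\}$ match exactly the assumptions of Theorem \ref{thm: all except one positive}, giving $D_\alpha(\mathbf c,\mathbf w)\leq0$; in case (3), $k=2$ and Theorem \ref{thm: two points} gives $D_\alpha(\mathbf c,\mathbf w)\leq0$. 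In cases (2) and (3), combining $D_\alpha(\mathbf c,\mathbf w)\leq0$ with the identification $D_f(\alpha)=D_\alpha(\mathbf c,\mathbf w)$ and the positivity of the prefactor finishes the argument.

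I expect the only genuinely delicate point to be the identification $D_f(\alpha)=D_\alpha(\mathbf c,\mathbf w)$ in the $\Lambda_\alpha$ case, because $D_f(\alpha)$ is built from the outer-function branch of $\log f^\alpha$ governed by \eqref{eqn: outer function formula}, whereas $D_\alpha$ uses the principal branch $\Log$. One must take care that all the $w_i$ lie in one connected region on which $f^\alpha$ takes values in $\HH$, so that the two branches differ by a single common constant rather than point-dependent ones; otherwise the cancellation in $\overline{\log f^\alpha(w_i)}+\log f^\alpha(w_j)$ fails. Everything else is bookkeeping: matching the hypotheses of each case to the appropriate theorem and recording that $N_f^{1-2\alpha}(\alpha)>0$.
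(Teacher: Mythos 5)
Your proposal matches the paper's own treatment: Theorem \ref{thm: all results about derivative} is presented there as a pure summary (``In terms of $\frac{\partial}{\partial\alpha}N_f(\alpha)$, we summarize our results as follows''), with the intended justification being exactly your combination of the discrete formula \eqref{eqn: discrete derivative}, the identification $D_f(\alpha)=D_\alpha(\mathbf c,\mathbf w)$ from Remark \ref{rem 1}, and Theorems \ref{thm: all positive}, \ref{thm: all except one positive} and \ref{thm: two points} for the three respective cases. The branch-of-logarithm caveat you single out in the $\Lambda_\alpha$ case is precisely the one the paper itself flags in Remark \ref{rem 1}, where the identification is asserted only under the hypothesis that all $w_i$ lie in a connected open set mapped by $f^\alpha$ into $\HH$; so your argument is correct to exactly the same extent, and in the same way, as the paper's implicit one.
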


\section{Norm Inequalities for Linear Combinations of Two Reproducing Kernels}\label{sec: an application}

Recall that in Lemma \ref{lem: limit is hardy norm}, we showed that if $\frac{\partial}{\partial\alpha}N_f(\alpha)\leq0, \forall\alpha>1$, for some $f\in\OOO^\ast$, then Conjecture 2 holds for $f$. In this section, we provide an alternative way of proving results on Conjecture 2, using results obtained in Section \ref{sec: sufficient conditions}. As a consequence, we prove the following theorem.

\begin{thm}\label{thm: application}
	Suppose $f\in H^2$ and $f=\eta F$, where $\eta$ is inner and $F$ has no zeros in $\DD$. Suppose
		\[
		F^\alpha=c_1K_{w_1,\alpha}+c_2K_{w_2,\alpha}.
		\] 
	for some $\alpha>1$, and $\mathbf c\in\CC^2, \mathbf w\in\DD^2$.
Then for any $1\leq\beta\leq\alpha$, we have 
\begin{equation}\label{eqn: norm ineq 1}
\|F\|_{A_\alpha^{2\alpha}}\leq\|F\|_{A_\beta^{2\beta}}.
\end{equation}
Equality holds if and only if $F^\alpha=cK_{w,\alpha}$ for some $c\in\CC$ and $w\in\DD$.
As a consequence, we have
\begin{equation}\label{eqn: norm ineq 2}
\|f\|_{A_\alpha^{2\alpha}}\leq\|f\|_{H^2}.
\end{equation}
Equality holds if and only if $f=cK_{w,1}$ for some $c\in\CC$ and $w\in\DD$.
\end{thm}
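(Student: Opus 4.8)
The plan is to prove the monotonicity statement \eqref{eqn: norm ineq 1} and read off the rest. First I would strip away the inessential pieces. Since $\eta$ is inner, $\|f\|_{H^2}=\|F\|_{H^2}$ and $\|f\|_{A_\gamma^{2\gamma}}\le\|F\|_{A_\gamma^{2\gamma}}$ for every $\gamma\ge1$, so it suffices to treat the zero-free factor $F$; replacing $F$ by its dilations $F(rz)$ (as in Lemma \ref{lem: O is dense in outer}) I may assume $F\in\OOO^\ast$, whence Lemma \ref{lem: limit is hardy norm} gives $\lim_{\gamma\to1+}N_F(\gamma)=\|F\|_{H^2}$, where $N_F(\gamma)=\|F\|_{A_\gamma^{2\gamma}}$. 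Thus \eqref{eqn: norm ineq 2} is the limiting case $\beta\to1+$ of \eqref{eqn: norm ineq 1}, and the whole theorem reduces to the single claim that $\gamma\mapsto N_F(\gamma)$ is non-increasing on $[1,\alpha]$.

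The first ingredient is the sign of the derivative at the right endpoint. Writing $G=F^\alpha=c_1K_{w_1,\alpha}+c_2K_{w_2,\alpha}$, after the normalization used in Section \ref{sec: sufficient conditions} the pair $(\mathbf c,\mathbf w)$ falls under Theorem \ref{thm: two points}. Combining Theorem \ref{thm: discrete formula} with Remark \ref{rem 1} gives $\frac{\partial}{\partial\gamma}N_F(\gamma)\big|_{\gamma=\alpha}=\frac{1}{2\alpha^2}N_F(\alpha)^{1-2\alpha}D_\alpha(\mathbf c,\mathbf w)$, and Theorem \ref{thm: two points} (the case $k=2$, also recorded in Theorem \ref{thm: all results about derivative}(3)) yields $D_\alpha(\mathbf c,\mathbf w)\le0$, with equality precisely when $G$ is a single reproducing kernel. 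Hence $\frac{\partial}{\partial\gamma}N_F(\gamma)\big|_{\gamma=\alpha}\le0$.

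The second ingredient upgrades this one-point bound to monotonicity on all of $[1,\alpha]$. Set $g=|F|^2(1-|z|^2)$ and $\Psi(\gamma)=\log N_F(\gamma)^{2\gamma}=\log\big((\gamma-1)\int_\DD g^\gamma\,\mathrm{d}\mu\big)$, so that $\log N_F(\gamma)=\Psi(\gamma)/(2\gamma)$ and $\frac{\mathrm{d}}{\mathrm{d}\gamma}\log N_F(\gamma)=\big(\gamma\Psi'(\gamma)-\Psi(\gamma)\big)/(2\gamma^2)$. Putting $m(\gamma)=\gamma\Psi'(\gamma)-\Psi(\gamma)$ we have $m'(\gamma)=\gamma\Psi''(\gamma)$; therefore, \emph{if} $\Psi$ is convex on $(1,\alpha]$, then $m$ is non-decreasing, and the endpoint bound $m(\alpha)\le0$ from the previous paragraph forces $m(\gamma)\le0$ throughout $[1,\alpha]$, i.e. $N_F$ is non-increasing there. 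Using the identity $\int_\DD(1-|z|^2)^{\gamma-2}\frac{\mathrm{d}m}{\pi}=\frac{1}{\gamma-1}$ one may write $\Psi(\gamma)=\log\big(\int_\DD g^\gamma\,\mathrm{d}\mu\big)-\log\big(\int_\DD g_0^\gamma\,\mathrm{d}\mu\big)$ with $g_0=1-|z|^2$, so that convexity of $\Psi$ amounts to the sharp variance comparison $\mathrm{Var}_{\pi_\gamma}(\log g)\ge\mathrm{Var}_{\pi_\gamma^0}(\log g_0)=(\gamma-1)^{-2}$, where $\mathrm{d}\pi_\gamma\propto g^\gamma\,\mathrm{d}\mu$ and $\mathrm{d}\pi_\gamma^0\propto g_0^\gamma\,\mathrm{d}\mu$; the right-hand value is exactly the one attained when $F$ is a single kernel.

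I expect the convexity of $\Psi$ (equivalently the sharp variance estimate) to be the main obstacle. The tilted measure $\pi_\gamma$ depends on $F$, and $\log g=2\,\mathrm{Re}\log F+\log(1-|z|^2)$ couples a harmonic term to the Bergman weight, so the estimate does not follow formally from Hölder's inequality and must be extracted from the explicit two-kernel structure, i.e. from the residue-and-Stokes mechanism behind Theorem \ref{thm: discrete formula}. Granting it, \eqref{eqn: norm ineq 1} and \eqref{eqn: norm ineq 2} follow immediately. For the equality clauses I would track the equality case of Theorem \ref{thm: two points}: for a genuine combination of two distinct kernels $\Psi$ is strictly convex and $m$ strictly increasing, so $N_F(\alpha)=N_F(\beta)$ can occur only if $D_\alpha(\mathbf c,\mathbf w)=0$, which by Theorem \ref{thm: two points} means $F^\alpha=cK_{w,\alpha}$; pushing this back through the inner-factor reduction gives $f=cK_{w,1}$ in the equality case of \eqref{eqn: norm ineq 2}.
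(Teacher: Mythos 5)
Your reduction to the zero-free factor and your computation of the endpoint derivative are sound and parallel the paper: $\frac{\partial}{\partial\gamma}N_F(\gamma)\big|_{\gamma=\alpha}\le 0$ does follow from Theorem \ref{thm: discrete formula}, Remark \ref{rem 1} and Theorem \ref{thm: two points}, after the half-plane normalization (which the paper carries out carefully in steps (2)--(3) of its proof, via a rotation, a unimodular constant, and a M\"obius change of variable). The genuine gap is your second ingredient. Monotonicity of $N_F$ on $[1,\alpha]$ does not follow from the endpoint bound without the convexity of $\Psi$, and that convexity --- your sharp variance inequality $\mathrm{Var}_{\pi_\gamma}(\log g)\ge(\gamma-1)^{-2}$ --- is precisely the hard part, which you leave unproved (``granting it''). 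Nothing supplies it: the residue-and-Stokes machinery of Theorem \ref{thm: discrete formula} computes the derivative of $N_F$ only at the single value $\gamma=\alpha$, where $F^\gamma$ happens to be a combination of the kernels $K_{w_i,\gamma}$; for $\gamma<\alpha$ the function $F^\gamma$ is \emph{not} such a combination, so that mechanism says nothing about $\Psi''(\gamma)$. Your variance claim is in effect a strengthened form of the open Question 3 restricted to these functions, so the proposal reduces the theorem to a statement at least as hard as the one being proved.

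The paper's actual route avoids differentiating $N_F(\beta)$ altogether, by a projection trick. For $1\le\beta\le\alpha$ set
\[
N_\beta=\big(F^\beta(\mathbf w)\mathbf W_\beta^{-1}F^\beta(\mathbf w)^*\big)^{1/\beta}=\|P_{\mathbf w,\beta}F^\beta\|_{A_\beta^2}^{2/\beta},
\]
the norm of the projection of $F^\beta$ onto $\mathrm{span}\{K_{w_1,\beta},K_{w_2,\beta}\}$. At $\beta=\alpha$ this equals $\|F\|_{A_\alpha^{2\alpha}}^{2}$ (since $F^\alpha$ lies in the span), while for every $\beta$ it is at most $\|F\|_{A_\beta^{2\beta}}^{2}$ (projections contract norms). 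Crucially, the projection \emph{is} a two-kernel combination for every $\beta$, with coefficients $\mathbf c_\beta=F^\beta(\mathbf w)\mathbf W_\beta^{-1}$, so its derivative is computable at every $\beta$: one gets $\frac{\mathrm d}{\mathrm d\beta}N_\beta=\frac{1}{\beta^2}N_\beta^{1-\beta}\widehat D_\beta(F^\beta(\mathbf w),\mathbf w)\le 0$ by Theorem \ref{thm: two points}, applied through Definition \ref{def: Dhatalpha} with the values $F^\beta(\mathbf w)$ (not the coefficients) as data; the half-plane hypothesis $F^\beta(\mathbf w)\in\HH^2$ persists for all $\beta\in[1,\alpha]$ by the branch argument in case (1) of the paper's proof. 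The chain $\|F\|_{A_\alpha^{2\alpha}}=N_\alpha^{1/2}\le N_\beta^{1/2}\le\|F\|_{A_\beta^{2\beta}}$ then finishes the inequality: Theorem \ref{thm: two points} is invoked at \emph{every} $\beta$, not only at $\beta=\alpha$, and no convexity input is needed. The equality statement likewise comes from this ODE argument (equality forces $\widehat D_\beta=0$, hence $c_{1,\beta}=0$ or $c_{2,\beta}=0$, by the equality case of Theorem \ref{thm: two points}), not from the strict convexity of $\Psi$, which you also assert without proof.
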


The proof is based on a different way of viewing $D_\alpha(\mathbf c,\mathbf w)$. Recall that in Definition \ref{def: Dalpha}, for $\alpha>0$ and $(\mathbf c,\mathbf w)\in\Lambda_\alpha\cup\Gamma$, we defined $W_\alpha=[\frac{1}{(1-\overline{w_i}w_j)^\alpha}]$ and $\mathbf f_\alpha=\mathbf c\mathbf W_\alpha$. Then $D_\alpha(\mathbf c,\mathbf w)$ is defined using $\mathbf f_\alpha$ and $\mathbf W_\alpha$. In the case when $\{w_i: i=1,\cdots,k\}$ are distinct points, the matrix $\mathbf W_\alpha$ is invertible. Therefore we have $\mathbf c=\mathbf f_\alpha\mathbf W_\alpha^{-1}$. This means we can define $D_\alpha(\mathbf c,\mathbf w)$ using $\mathbf f_\alpha$.

\begin{Def}\label{def: Dhatalpha}
	Suppose $\alpha>0$, $k$ is a positive integer, and $\mathbf w\in\DD^k$ is such that $\{w_i: i=1,\cdots,k\}$ are distinct. Define $\mathbf W_\alpha$ as usual. Suppose either $\mathbf f\in\HH^k$ or $\mathbf w\in(-1,1)^k, \mathbf f\in\RR^k$. Let $\mathbf c_\alpha=\mathbf f\mathbf W_\alpha^{-1}$. Define
	\begin{equation}\label{eqn: Dhatalpha1}
	\widehat{D}_\alpha(\mathbf f,\mathbf w)=D_\alpha(\mathbf c_\alpha,\mathbf w).
	\end{equation}
\end{Def}

\begin{Def}
	Suppose $\mathbf w\in\DD^k$, $\{w_1,\cdots,w_k\}$ are distinct, and $\alpha>0$. Define
	\[
	\KKK_{\mathbf w,\alpha}=\mathrm{span}\{K_{w_i,\alpha}: i=1,\cdots,k\}\subset A_\alpha^{2}
	\]
	and $P_{\mathbf w,\alpha}$ the orthogonal projection from $A_\alpha^2$ onto $\KKK_{\mathbf w,\alpha}$. For $f\in A_\alpha^2$, if we denote $f(\mathbf w)=(f(w_1),\cdots,f(w_k))$, then it is easy to compute that 
	\[
	\|P_{\mathbf w,\alpha}(f)\|_{A_\alpha^2}^2=f(\mathbf w)\mathbf W_\alpha^{-1}f(\mathbf w)^*.
	\]
\end{Def}

\begin{proof}[\textbf{Proof of Theorem \ref{thm: application}}]
	It is easy to see that \eqref{eqn: norm ineq 2} follows from \eqref{eqn: norm ineq 1}. Thus we only need to prove \eqref{eqn: norm ineq 1}.
	
\noindent(1) First, we prove \eqref{eqn: norm ineq 1} under the following conditions.
\begin{itemize}
	\item[(i)] $F^\alpha=c_1+c_2K_{w,\alpha}$, $w\geq0$;
	\item[(ii)] There is a connected open neighborhood $\Omega$, of $\{0, w\}$, such that $F^\alpha(\Omega)\subset\HH$.
\end{itemize}
Assume the above, then we can choose $\log F^\alpha$ such that $\log F^\alpha|_\Omega=\Log F^\alpha|_\Omega$. As a consequence, $F^\beta|_\Omega\subset\HH$ for any $\beta\in[1,\alpha]$. 

For $1\leq\beta\leq\alpha$, consider 
\[
N_\beta:=\bigg(F^\beta(\mathbf w)\mathbf W_\beta^{-1}F^\beta(\mathbf w)^*\bigg)^{1/\beta}=\|P_{\mathbf w,\beta}F^\beta\|_{A_\beta^2}^{2/\beta},
\]
where $F^\beta(\mathbf w)=(F^\beta(w_1), F^\beta(w_2))$.
Then by straight-forward computation, we have
\[
\frac{\mathrm{d}}{\mathrm{d}\beta}N_\beta=\frac{1}{\beta^2}N^{1-\beta}_\beta\widehat{D}_\beta(F^\beta(\mathbf w), \mathbf w)\leq0.
\]
The last inequality is because of Theorem \ref{thm: two points}. Therefore we have
\[
\|F\|_{A_\alpha^{2\alpha}}=\|F^\alpha\|_{A_\alpha^2}^{1/\alpha}=N_\alpha^{1/2}\leq N_\beta^{1/2}=\|P_{\mathbf w,\beta}F^\beta\|_{A_\beta^2}^{1/\beta}\leq\|F^\beta\|_{A_\beta^2}^{1/\beta}=\|F\|_{A_\beta^{2\beta}}.
\]
If we assume that $w\neq0$, then by Theorem \ref{thm: two points}, we also know that the equality holds if and only if for each $\beta$, either $c_{1,\beta}=0$ or $c_{2,\beta}=0$. In particular, either $c_1=0$ or $c_2=0$. On the other hand, if either $c_1=0$ or $c_2=0$, then it is easy to check that the equality in \eqref{eqn: norm ineq 1} holds.
This completes the proof for case (1).

\medskip

\noindent(2) Next, we consider the case when $F^\alpha=c_1+c_2K_{w,\alpha}, w\in\DD, c_1, c_2\in\CC$. Choose $\theta\in[0,2\pi]$ so that $e^{i\theta}w\geq0$. Let $F_\theta(z)=F(e^{-i\theta}z)$. Then $F^\alpha_\theta=c_1+c_2K_{e^{i\theta}w,\alpha}$. Inequality \eqref{eqn: norm ineq 1} for $F$ follows from \eqref{eqn: norm ineq 1} for $F_\theta$. Thus we may assume that $w\geq0$ in the beginning. Suppose $F$ has no zeros in $\DD$ and $F^\alpha=c_1+c_2K_{w,\alpha}$ with $w\geq0$. We will show that after multiplying $F$ by a non-zero constant, the condition (ii) in case (1) will be satisfied. This will lead to \eqref{eqn: norm ineq 1} for case (2). We may as well assume that $F^\alpha=1+cK_{w,\alpha}$ with $w\geq0$. It is easy to see that $F^\alpha$ maps the interval $[0,w]$ onto the (complex valued) interval between $1+c$ and $1+\frac{c}{(1-|w|^2)^\alpha}$, which we denote by $I$. Since $F$ has no zeros in $\DD$, $0\notin I$. Thus $I$ must be contained in some half plane $e^{i\theta_1}\HH$. By standard trick we can find a connected open neighborhood $\Omega$ of $[0,w]$ such that $F^\alpha(\Omega)\subset e^{i\theta_1}\HH$. Therefore $\big(e^{-i\theta_1/\alpha}F\big)^\alpha(\Omega)\subset\HH$. This completes the proof for case (2).

\medskip

\noindent(3) In general, suppose $F$ has no zeros in $\DD$ and $F^\alpha=c_1K_{w_1,\alpha}+c_2K_{w_2,\alpha}$. Denote
\[
k_{w,\beta}(z)=\frac{(1-|w|^2)^{\beta/2}}{(1-\overline{w}z)^\beta}, \quad w\in\DD, \beta>0.
\]
Let $G=F\circ\varphi_{w_1}\cdot k_{w_1,1}$. Then $G$ also has no zeros in $\DD$ and $G^\beta=F^\beta\circ\varphi_{w_1}\cdot k_{w_1,\beta}$. It is standard to check that $\|G^\beta\|_{A_\beta^2}=\|F^\beta\|_{A_\beta^2}$, $\forall\beta\in[1,\alpha]$. Notice that \eqref{eqn: norm ineq 1} is equivalent to 
\[
\|F^\alpha\|_{A_\alpha^2}^{1/\alpha}\leq\|F^\beta\|_{A_\beta^2}^{1/\beta}, \quad\beta\in[1,\alpha].
\]
So it suffices to prove 
\[
\|G^\alpha\|_{A_\alpha^2}^{1/\alpha}\leq\|G^\beta\|_{A_\beta^2}^{1/\beta}, \quad \beta\in[1,\alpha],
\]
which is in turn, equivalent to $\|G\|_{A_\alpha^{2\alpha}}\leq\|G\|_{A_\beta^{2\beta}}, \beta\in[1,\alpha]$.
By straight-forward computation we have
\[
G^\alpha=\frac{c_1}{(1-|w_1|^2)^{\alpha/2}}+c_2\frac{(1-|w_1|^2)^{\alpha/2}}{(1-\overline{w_2}w_1)^\alpha}K_{\varphi_{w_1}(w_2),\alpha}.
\]
Thus $G$ satisfies case (2). This completes the proof of \eqref{eqn: norm ineq 1} in the general case. Tracing back to the proof of (1), we also see that the equality in \eqref{eqn: norm ineq 1} holds if and only if $F^\alpha=cK_{w,\alpha}$ for some $c\in\CC$ and $w\in\DD$. Then \eqref{eqn: norm ineq 2} follows immediately. This completes the proof.
\end{proof}

In terms of Conjecture 1, \eqref{eqn: norm ineq 2} becomes the following.
\begin{cor}\label{cor: conj 1 for two points}
Suppose $0<p\leq2$ and $\alpha=\frac{2}{p}$. Suppose $f\in H^p$, $f=\eta F$, where $\eta$ is inner and $F$ has no zeros in $\DD$. If 
\[
F=c_1K_{w_1,\alpha}+c_2K_{w_2,\alpha},
\]
then 
\[
\|f\|_{A_{2/p}^2}\leq\|f\|_{H^p}.
\]
Equality holds if and only if $f=cK_{w,\alpha}$ for some $c\in\CC$ and $w\in\DD$.
\end{cor}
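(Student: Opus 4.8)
The plan is to read Corollary~\ref{cor: conj 1 for two points} off Theorem~\ref{thm: application} via the substitution that turns Conjecture~1 into Conjecture~2. Put $\alpha=2/p\geq1$. If $p=2$ then $\alpha=1$ and $A_{2/p}^2=A_1^2=H^2$, so the inequality is a trivial equality; I therefore assume $0<p<2$, i.e.\ $\alpha>1$, from here on.

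First I would record the two norm identities behind the equivalence of the conjectures: for $G\in H^2$ with $\alpha p=2$,
\[
\|G\|_{A_\alpha^{2\alpha}}=\|G^\alpha\|_{A_\alpha^2}^{1/\alpha},\qquad \|G\|_{H^2}=\|G^\alpha\|_{H^p}^{1/\alpha},
\]
the first because $|G|^{2\alpha}=|G^\alpha|^2$ (so the weighted Bergman integrals agree) and the second because $|G|^2=|G^\alpha|^p$ on each circle $|z|=r$ (using $2/\alpha=p$), followed by taking the supremum in $r$. Now, since the Corollary's $F=c_1K_{w_1,\alpha}+c_2K_{w_2,\alpha}$ is bounded and zero-free on the simply connected disc $\DD$, the function $G:=F^{1/\alpha}$ is a well-defined, non-vanishing analytic function with $G^\alpha=F$, and $G\in H^2$ because $\|G\|_{H^2}^2=\|F\|_{H^p}^p<\infty$ (here $F\in H^\infty\subset H^p$). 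Hence $G$ satisfies the hypotheses of Theorem~\ref{thm: application} with trivial inner factor and $G^\alpha=c_1K_{w_1,\alpha}+c_2K_{w_2,\alpha}$; the theorem gives $\|G\|_{A_\alpha^{2\alpha}}\leq\|G\|_{H^2}$, which through the two identities above becomes $\|F\|_{A_{2/p}^2}\leq\|F\|_{H^p}$.

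Finally I would reinstate the inner factor and settle the equality case. From $|\eta|\leq1$ on $\DD$ we get $\|f\|_{A_{2/p}^2}\leq\|F\|_{A_{2/p}^2}$, and from $|\eta|=1$ a.e.\ on $\TT$ we get $\|f\|_{H^p}=\|F\|_{H^p}$; combining these with the previous step yields $\|f\|_{A_{2/p}^2}\leq\|f\|_{H^p}$. Equality throughout forces, on the one hand, $\eta$ to be a unimodular constant (otherwise $|\eta|<1$ on a set of positive area makes the first step strict) and, on the other hand, by the equality clause of Theorem~\ref{thm: application} applied to $G$ at $\beta=1$, that $G^\alpha=F=cK_{w,\alpha}$; together these give $f=cK_{w,\alpha}$, and the reverse implication is immediate since a single kernel is extremal.

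I anticipate no serious obstacle here, as all the analytic substance lives in Theorem~\ref{thm: application} and what remains is a change of variables. The only points needing genuine care are the verification of the two norm identities, the check that $G=F^{1/\alpha}$ is legitimately single-valued and of class $H^2$, and the clean propagation of the equality condition through the inner factor.
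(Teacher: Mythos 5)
Your proposal is correct and follows exactly the route the paper intends: the paper states this corollary with no separate proof, treating it as the restatement of Theorem \ref{thm: application} in Conjecture-1 coordinates via the substitution $G=F^{1/\alpha}$, the two norm identities $\|G\|_{A_\alpha^{2\alpha}}=\|G^\alpha\|_{A_\alpha^2}^{1/\alpha}$ and $\|G\|_{H^2}=\|G^\alpha\|_{H^p}^{1/\alpha}$, and the standard inner-factor reduction $\|f\|_{A_{2/p}^2}\leq\|F\|_{A_{2/p}^2}$, $\|f\|_{H^p}=\|F\|_{H^p}$ --- all of which you spell out, including the correct use of the $\beta=1$ equality clause of \eqref{eqn: norm ineq 1} rather than the clause of \eqref{eqn: norm ineq 2}. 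Your handling of the equality case (maximum modulus forcing $\eta$ constant, plus the kernel computation for the converse) matches the argument implicit in the paper's proof of Theorem \ref{thm: application}, so there is nothing to add.
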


\section{Remarks and Numerical Evidences}\label{sec: some evidences}
\subsection{Some Further Remarks}
\noindent(1) In Proposition \ref{prop: K epsilon}, we give a sufficient condition for Conjecture 2 to hold. In its most general form, we list the conjecture below.

\medskip

\noindent\textbf{Conjecture 4.} Suppose $\alpha>0$ and $(\mathbf c, \mathbf w)\in \Lambda_\alpha$. Then $D_\alpha(\mathbf c, \mathbf w)$ defined as in \eqref{eqn: Dalpha 1} and \eqref{eqn: Dalpha 3}
is non-positive.

\medskip

Another interesting question to ask is the following.

\medskip

\noindent\textbf{Question 5.} Suppose Conjecture 4 holds. Does it imply that the norms $\|f^\alpha\|^{1/\alpha}_{A_\alpha^2}$ are non-increasing for $\alpha>0$, for some set of functions $f$? 

\medskip

\noindent(2) An immediate observation from \eqref{eqn: Dalpha 1} is that $D_\alpha(\mathbf c, \mathbf w)$ is the bi-linear form $\mathbf c\mathbf W_\alpha\circ \mathbf B_{\mathbf c,\mathbf w}\mathbf c^*$, where $\mathbf B_{\mathbf c,\mathbf w}=\bigg[\overline{\Log f_i}+\Log f_j-\Log (a_{ij,\alpha}N_\alpha)\bigg]_{i,j=1,\cdots,k}$ and $A\circ B$ denotes the Hadmadard product of $A$ and $B$. Thus a sufficient condition of Conjecture 4 would be that $-\mathbf B_{\mathbf c,\mathbf w}$ is semi-positive definite. However, this is not true, even in the simplest case.  Take, for example, $c_1=c_2=1$, $w_1=0$, and $w_2\in(0,1)$ such that $\frac{1}{(1-w_2^2)^\alpha}=1.1$.

\medskip

\noindent(3) In Section \ref{sec: an application}, we also defined the function $\widehat{D}_\alpha(\mathbf f, \mathbf w)$. Then Conjecture 4 is equivalent to the following.

\medskip

\noindent{\textbf{Conjecture 6.}} Suppose $\alpha>0$, $\mathbf w\in(-1,1)^k$ and $\mathbf f\in\HH^k$. Then $\widehat{D}_\alpha(\mathbf f, \mathbf w)\leq0$.

\medskip

\noindent(4) There is , yet another way of looking at \eqref{eqn: Dalpha 1}. Suppose $\mathbf A=[a_{ij}]$ is a semi-positive definite $k\times k$ matrix, $a_{ij}\in\HH$, and $\mathbf c\in\CC^k$ is such that each entry of $\mathbf f:=\mathbf c\mathbf A$ belongs to $\HH$. Then we can define $N(\mathbf c,\mathbf A)=\mathbf c\mathbf A\mathbf c^*$ and
\[
\widetilde{D}(\mathbf c,\mathbf A)=2\mathrm{Re}\sum_{i=1}^kc_i\overline{f_i\Log f_i}-\sum_{i,j=1}^kc_i\overline{c_j}a_{ij}\Log a_{ij}-N(\mathbf c,\mathbf A)\Log N(\mathbf c,\mathbf A).
\]
Then we can ask whether $\widetilde{D}(\mathbf c,\mathbf A)\leq0$. In the proofs of Theorem \ref{thm: all except one positive} and Theorem \ref{thm: two points}, we are secretly using this definition: we let the matrix $\mathbf A$ to vary from $\mathbf W_\alpha$ to a rank $1$ matrix. Then we used the fact that if $\mathbf A$ has rank $1$, then $\widetilde{D}(\mathbf c,\mathbf A)=0$. An interesting observation is the following: if we define $\widetilde{\mathbf A}=[\widetilde{a}_{ij}]$ to be the block matrix
\[
\widetilde{\mathbf A}=\begin{bmatrix}
	\mathbf c\mathbf A\mathbf c^*&\mathbf f\\
	\mathbf f^*&\mathbf A
\end{bmatrix},
\]
and $\widetilde{\mathbf c}=(-1,\mathbf c)$, then $\widetilde{\mathbf A}$ is semi-positive definite and $\widetilde{D}(\mathbf c,\mathbf A)=\widetilde{D}(\widetilde{\mathbf c},\widetilde{\mathbf A})$. Moreover, $\widetilde{D}(\widetilde{\mathbf c},\widetilde{\mathbf A})$ has the simple expression
\[
\widetilde{D}(\widetilde{\mathbf c},\widetilde{\mathbf A})=-\sum\widetilde{c}_i\overline{\widetilde{c}_j}\widetilde{a}_{ij}\Log\widetilde{a}_{ij}.
\]
In general, we ask the following question.

\medskip

\noindent\textbf{Question 7.} Suppose $\mathbf A=[a_{ij}]$ is semi-positive definite and $\mathrm{Re}a_{ij}\geq0$, $\forall i,j=1,\cdots,k$. Suppose $\mathbf c\in\CC^k$ is such that $\mathbf c\mathbf A=\mathbf 0$. Do we have
\[
\widetilde{D}(\mathbf c,\mathbf A)=-\sum_{i,j=1}^kc_i\overline{c_j}a_{ij}\Log a_{ij}\leq 0?
\]
\subsection{Numerical Evidences}
One of the advantages that Theorem \ref{thm: discrete formula} offers is that we can now test Question 3 using numerical methods. We have tested for a wide range of values of $\mathbf c$ and $\mathbf w$. We list a few graphs for the interested readers.

\noindent(1) In the proof of Theorem \ref{thm: all except one positive}, we showed that $D_\alpha(\mathbf c, \mathbf w)$ is non-increasing in $\alpha$ under the given conditions. However, Figure \ref{fig: alpha} shows that this is not always true.
\begin{figure}[htb]
	\centering\includegraphics[width=3.5in]{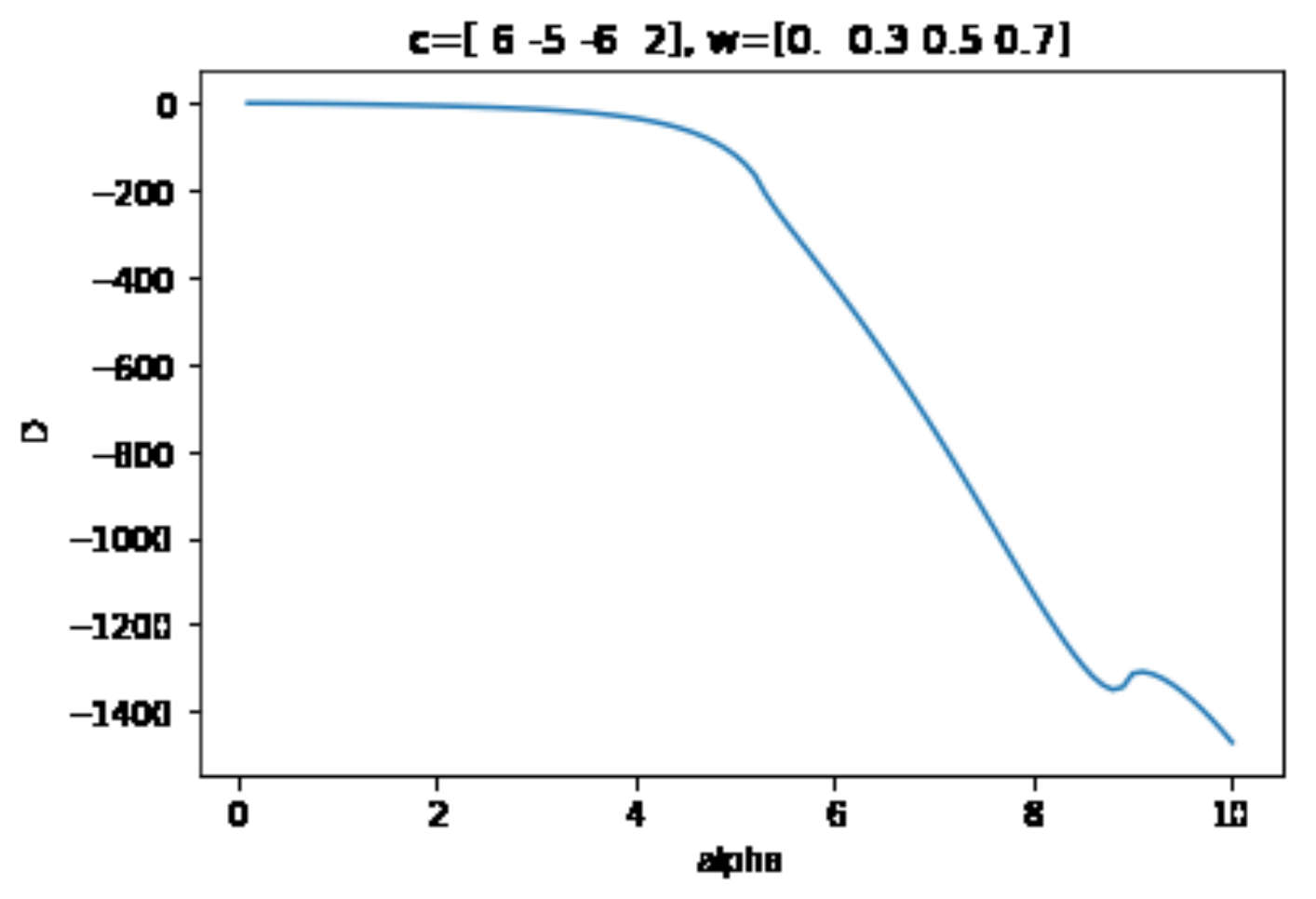}
	\caption{$k=4$, $\mathbf c, \mathbf w$ as indicated}\label{fig: alpha}
\end{figure}

\medskip

\noindent(2) If we adopt the definition $\widehat{D}_\alpha(\mathbf f,\mathbf w)$, then by fixing $f_2$ and $f_3$ and let $f_1$ vary, we get Figures \ref{fig: Re f1} and \ref{fig: Im f1}.

\begin{figure}[htb]
	\centering\includegraphics[width=3.5in]{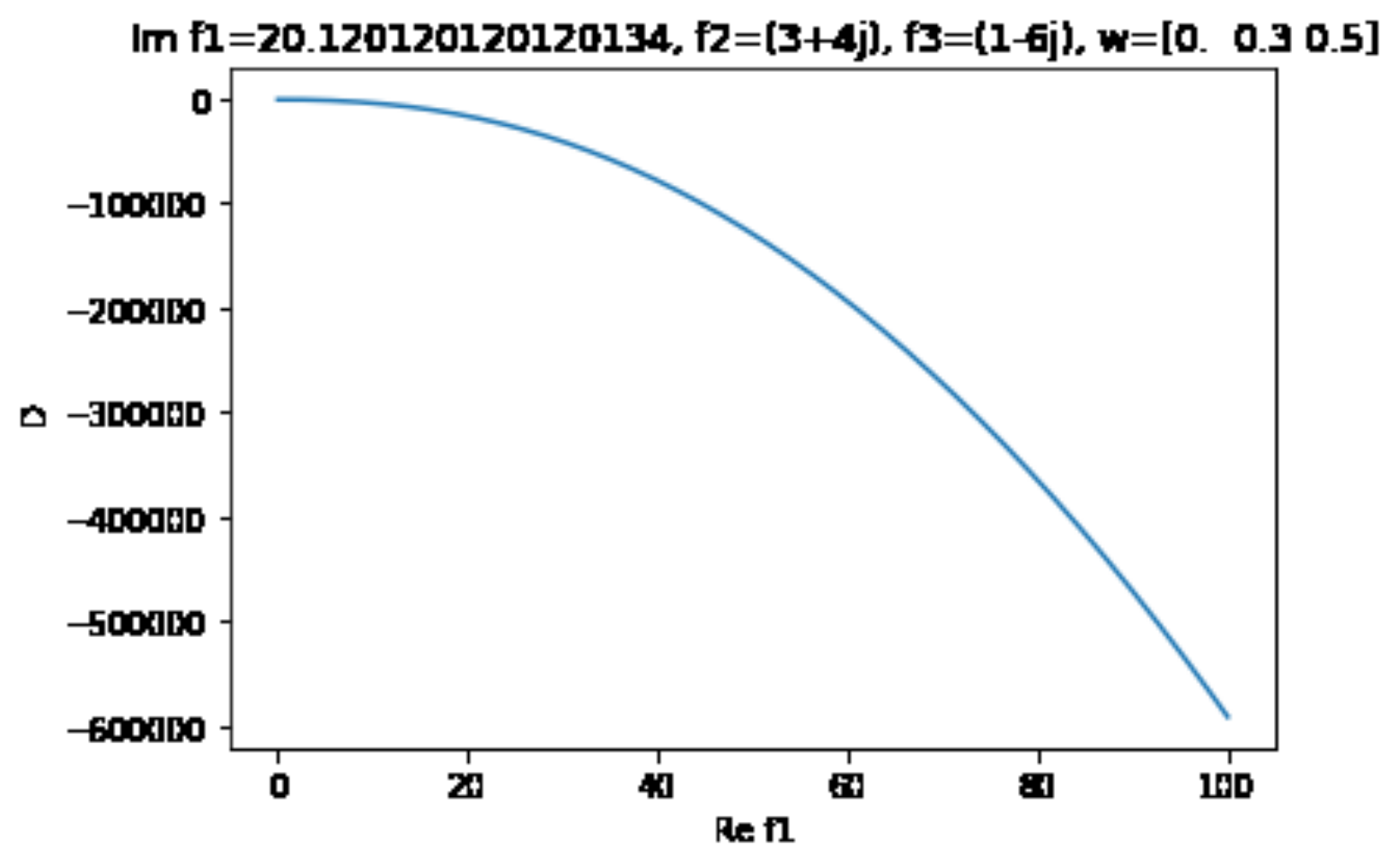}
	\caption{$k=3$, $\mathrm{Im}f_1$, $f_2$, $f_3$ and $\mathbf w$ as indicated}\label{fig: Re f1}	
\end{figure}

\begin{figure}[htb]	
	\centering\includegraphics[width=3.5in]{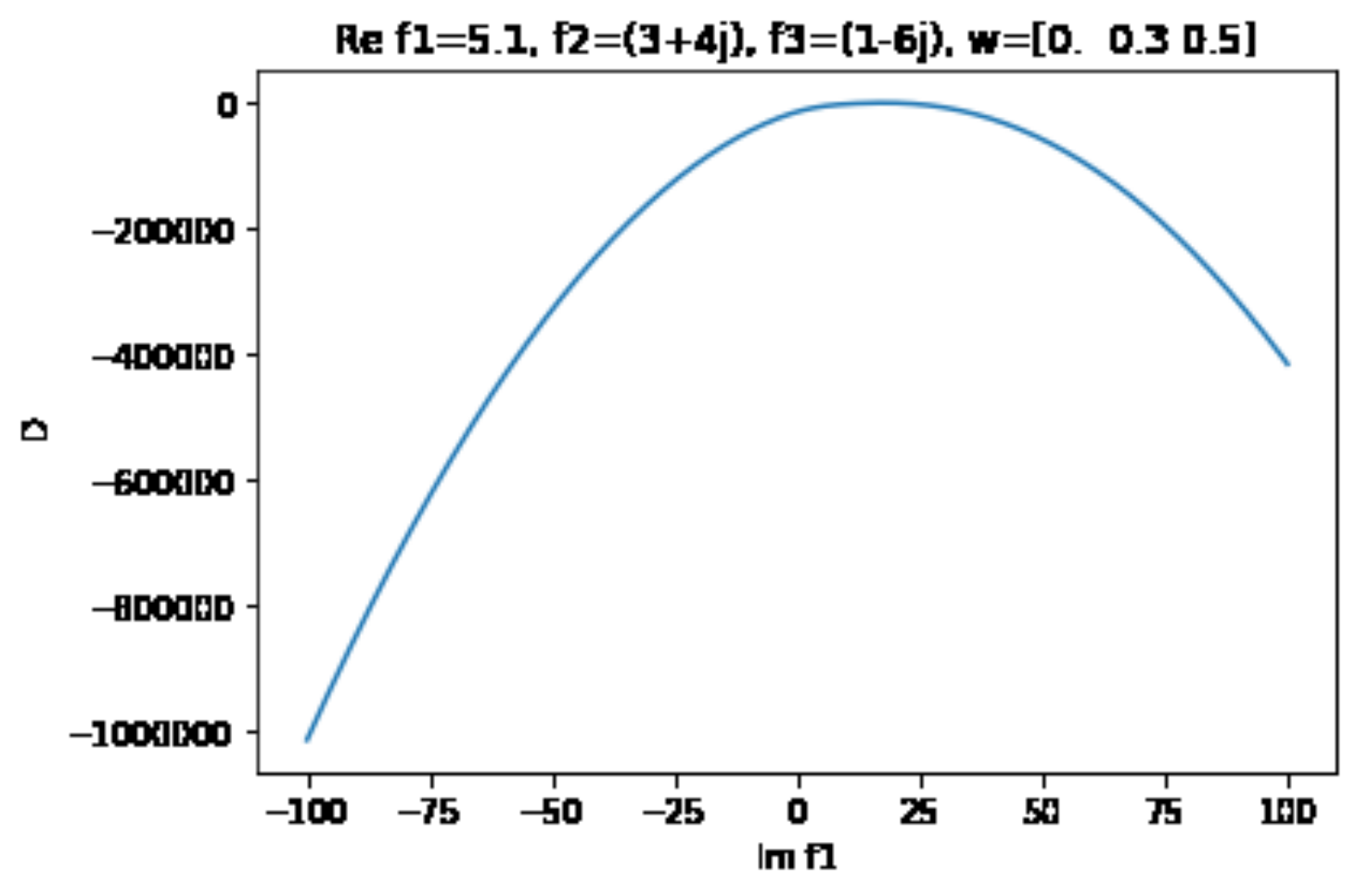}
	\caption{$k=3$, $\mathrm{Re}f_1$, $f_2$, $f_3$ and $\mathbf w$ as indicated}\label{fig: Im f1}
\end{figure}

\medskip

\noindent(3) In the special case when all entries of $\mathbf c$ are real, we can use the definition \eqref{eqn: Dalpha 3}. By letting one of the coefficients vary, we get Figure \ref{fig: c3}.

\begin{figure}[htb]
	\centering\includegraphics[width=3.5in]{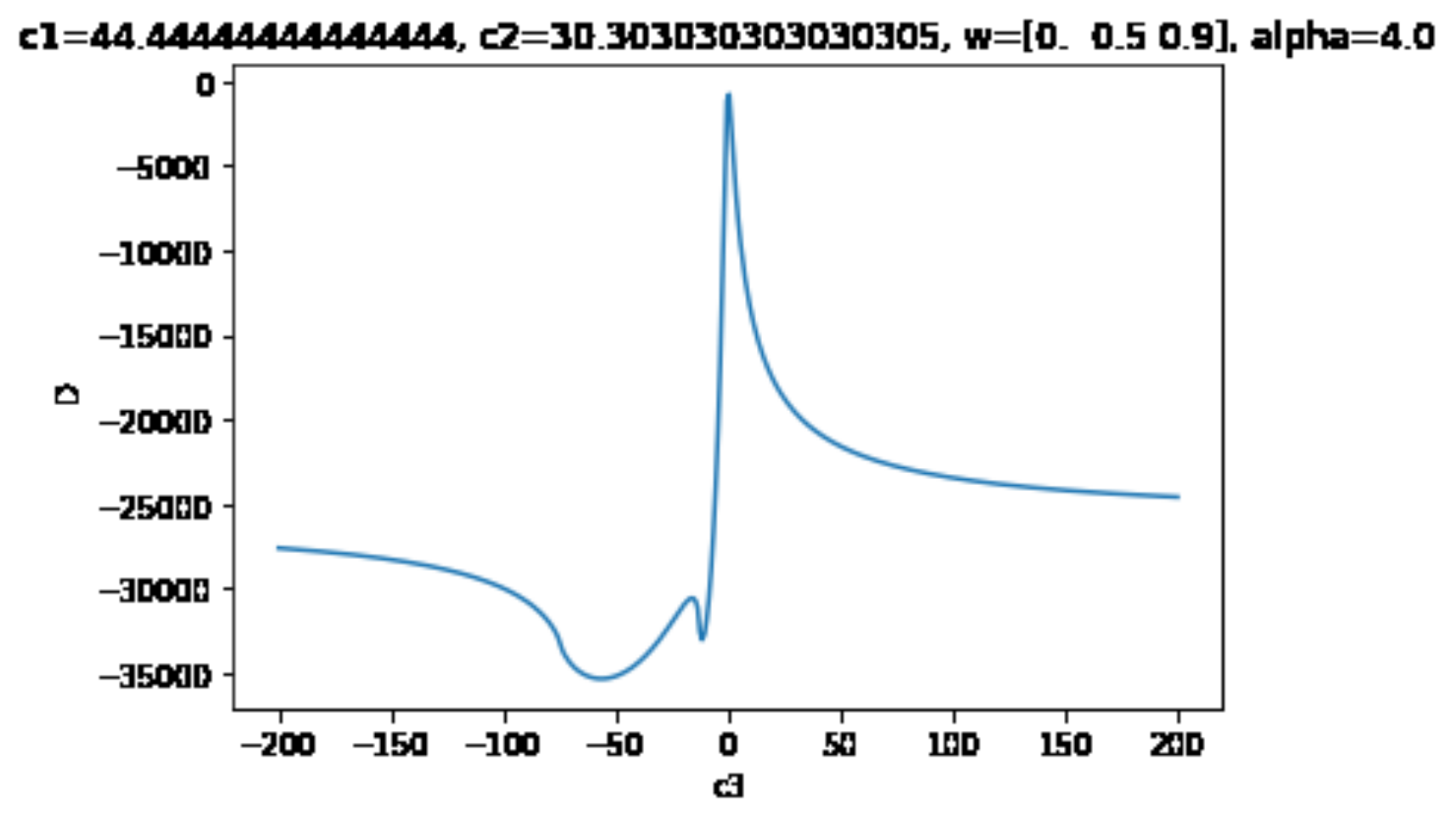}
	\caption{$k=3$, $c_1, c_2$ and $\mathbf w$ as indicated}\label{fig: c3}
\end{figure}

\medskip

\noindent\textbf{Acknowledgment:} The first and second author is partially supported by the Natural Science Foundation of China. The third author is partially supported by the Natural Science Foundation.


\newpage

\vskip3mm \noindent{Hui Dan, School of Mathematical Sciences, Fudan
	University, Shanghai, 200433, China, E-mail: hdan@fudan.edu.cn

\noindent Kunyu Guo, School of Mathematical Sciences, Fudan
University, Shanghai, 200433, China, E-mail: kyguo@fudan.edu.cn

\noindent     Yi Wang,  Department of Mathematics, SUNY Buffalo, 244 Mathematics Building
Buffalo, NY 14260-2900
E-mail: yiwangfdu@gmail.com
 }
\end{document}